 \newtheorem{thm}{Theorem}[section]
 \newtheorem{cor}[thm]{Corollary}
 \theoremstyle{definition}
 \newtheorem{defn}[thm]{Definition}
 \theoremstyle{remark}
 \newtheorem{rem}[thm]{\bf Remark}
  \newtheorem{assump}[thm]{\bf Assumption}
 \newtheorem{notat}[thm]{\bf Notation}
  \newtheorem{property}[thm]{\bf Property}
 \numberwithin{equation}{section}
\begin{document}

\begin{center}{\sc\large The Area Formula for Lipschitz Mappings of
Carnot--Carath\'{e}odory Spaces}\footnote{\footnotesize {\it
Mathematics Subject Classification (2000):} Primary 51F99;
Secondary 53B99, 58A99\newline{{\it Keywords:}
Carnot--Carath\'{e}odory space, differentiability, Lipschitz
mapping, sub-Riemannian quasimetric, area formula}}
\end{center}

\begin{center}
{\bf Maria Karmanova}
\end{center}

\begin{abstract}
We prove the sub-Riemannian analog of the area formula for Lipschitz (in sub-Riemannian sense)
mappings of equiregular Carnot--Carath\'{e}odory spaces.
\end{abstract}

\tableofcontents

\section{Introduction}

In Euclidean analysis, the well-known area formula {\cite{F}}
$$
\int\limits_{U}\mathcal J(\varphi,x)\,d\mathcal
H^n(x)=\int\limits_{\mathbb
R^k}\sum\limits_{x\in\varphi^{-1}(y)}\chi_U(x)\,d\mathcal H^n(y)
$$
is
proved for large classes of mappings $\varphi:U\to\mathbb R^k$, $U\subset\mathbb R^n$, $n\geq k$,
possessing some regularity
properties. Such classes include continuously differentiable
mappings, Lipschitz mappings; Sobolev mappings and approximately
differentiable mappings with Luzin property $\mathcal N$ (see e.~g.~{\cite{H}}), etc. This formula
is generalized to wide classes of mappings of Riemannian manifolds
and metric spaces~{\cite{F, kirch, AK, km1, km3, km4}}. In many
proofs of the area formula, the approximation of the initial
mapping $\varphi$ by the tangent one
$$w\mapsto D\varphi(x)[w]$$
is essentially used. In particular, the bi-Lipschitz equivalence of the
metrics in the manifold and in the tangent space is applicable in such
proofs.

On equiregular Carnot--Carath\'{e}odory spaces (or simply Carnot manifolds)
(see Definition~{\ref{defcarnot}} below), there are two
structures, namely, Riemannian and {\it sub-Riemannian}. Moreover,
metrics corresponding to these two structures, are not bi-Lipschitz equivalent.
Thus, mappings Lipschitz with respect to sub-Riemannian
metrics may not be Lipschitz with respect to Riemannian metrics,
consequently they may not be differentiable in the classical sense on a set
of non-zero exterior measure. For the mappings of Carnot
manifolds, there exists the specific notion of the {\it
sub-Riemannian differentiability}, or
$hc$-differentiability~{\cite{V2}} of mappings of Carnot manifolds.
\begin{defn}
A mapping $\varphi:U\to\widetilde{\mathbb M}$, $U\subset\mathbb M$, where $\mathbb M$ and $\widetilde{\mathbb M}$ are Carnot manifolds, is $hc$-differentiable
at a point $u\in U$ if there exists a horizontal homomorphism
$L_u:(\mathcal G^u\mathbb M, d_{cc}^u)\to ({\mathcal
G}^{\varphi(u)}\widetilde{\mathbb M}, \widetilde{d}_{cc}^{\varphi(u)})$ of
local Carnot groups, such that
$$
\widetilde{d}_{cc}(\varphi(w),L_u[w])=o(d_{cc}(u,w))\ \text{as $U\cap
{\mathcal G^u\mathbb M}\ni w\to u$}.
$$
Hereinafter, we denote $L_u$ by $\widehat{D}\varphi(u)$.
\end{defn}
In particular cases when both Carnot manifolds are just Carnot groups,
the notion of the $hc$-differential coincides with the one of the
$\mathcal P$-differential introduced by P.~Pansu {\cite{pan1}}.
This definition generalizes the classical definition of differentiability since a local Carnot group approximates the initial Carnot manifold with respect to a sub-Riemannian metric (just like a tangent space approximates a Riemannian manifold with respect to Riemannian metric).
One of results of the
paper~{\cite{V2}} is the following:

\begin{thm}
{\it Lipschitz (in the
sub-Riemannian sense) mappings of Carnot manifolds are $hc$-differentiable almost
everywhere}.
\end{thm}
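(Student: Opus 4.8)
The plan is to follow the Pansu--Rademacher scheme, adapted to the non-homogeneous setting of a Carnot manifold. The statement is local, so fix $u_0\in U$ and work in a neighbourhood in which both $\mathbb M$ and $\widetilde{\mathbb M}$ carry privileged (second-kind exponential) coordinates adapted to their filtrations, writing $d_{cc}$ and $\widetilde d_{cc}$ in these coordinates. Along any integral line $\gamma$ of a unit horizontal vector field the $cc$-quasidistance is comparable to the parameter, so $\varphi\circ\gamma$ is Lipschitz in the usual sense; by the classical Rademacher theorem it is differentiable for a.e.\ parameter value, which yields horizontal ``directional derivatives'' $X_i\varphi$ of $\varphi$ along a chosen horizontal frame $\{X_i\}$. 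A Fubini-type argument over the foliations by these integral lines shows that for a.e.\ $u$ all of the $X_i\varphi(u)$ exist simultaneously and depend measurably on $u$; discard the (null) exceptional set, and restrict further to the $\mathcal H^\nu$-density points of the remaining good set, where $\mathcal H^\nu$ is the Hausdorff measure, locally finite and positive by equiregularity. This produces the set of full measure on which $hc$-differentiability will be proved.

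At such a point $u$ define the candidate $L_u=\widehat D\varphi(u)$ by prescribing, on the first layer of the tangent cone $\mathcal G^u\mathbb M$, the linear map sending $X_i$ to the horizontal component of $X_i\varphi(u)$, and extending to the higher layers as the unique graded homomorphism of stratified Lie algebras determined by this first-layer map, equivalently a homogeneous homomorphism $(\mathcal G^u\mathbb M,d^u_{cc})\to(\mathcal G^{\varphi(u)}\widetilde{\mathbb M},\widetilde d^{\,\varphi(u)}_{cc})$ of local Carnot groups. Here one must verify that the first-layer map is compatible with the bracket relations of $\mathcal G^u\mathbb M$, so that the extension exists; this is where the Lipschitz hypothesis enters again, through the fact that iterated commutators of the horizontal flows of $\mathbb M$, after rescaling by the dilations $\delta_t$, approximate the corresponding iterated brackets in $\mathcal G^u\mathbb M$ with a controlled error.

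It remains to establish $\widetilde d_{cc}(\varphi(w),L_u[w])=o(d_{cc}(u,w))$ as $\mathcal G^u\mathbb M\cap U\ni w\to u$. The analytic core is the uniform local convergence of the $cc$-quasimetric to its nilpotentization, $d_{cc}(v,w)=d^u_{cc}(v,w)(1+o(1))$ as $v,w\to u$ (and likewise in $\widetilde{\mathbb M}$), together with the matching convergence of the horizontal flows. Using this one joins $u$ to $w$ by a horizontal polygonal curve of $N$ segments, each of length $\asymp d_{cc}(u,w)/N$ along the frame fields, estimates on each segment the error $\widetilde d_{cc}(\varphi(\cdot),L_u[\cdot])$ accumulated there by combining the differentiability along curves with the Lipschitz bound, and sums the contributions; letting $w\to u$ and $N\to\infty$ gives the claim. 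Equivalently, one passes to the blow-up limit of the equi-Lipschitz rescalings $\varphi_t(v)=\delta_{1/t}\bigl(\varphi(u)^{-1}\cdot\varphi(u\cdot\delta_t v)\bigr)$: a subsequential limit exists by Arzel\`a--Ascoli, is a homogeneous homomorphism, and is forced to equal $L_u$. The main obstacle is precisely this last step --- securing the required \emph{uniform} rates in the metric- and flow-comparison estimates over a set of full $\mathcal H^\nu$-measure, and the uniqueness of the blow-up; granted those quantitative comparison results, the telescoping estimate and the measure-theoretic bookkeeping are routine.
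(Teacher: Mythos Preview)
The paper does not contain a proof of this theorem. It is stated in the Introduction as a result quoted from~\cite{V2} (Vodopyanov, 2007), and is restated again at the beginning of Section~3 with the same attribution; it is then used as a black box in the proof of the area formula. There is therefore no ``paper's own proof'' to compare your attempt against.

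Your sketch follows the Pansu--Rademacher strategy, which is indeed the underlying approach of the cited reference: directional derivatives along horizontal lines via the classical Rademacher theorem, a Fubini argument to get them almost everywhere, assembly into a first-layer linear map, extension to a graded homomorphism, and then a blow-up/telescoping estimate to upgrade directional differentiability to $hc$-differentiability. As an outline this is correct in spirit.

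That said, your write-up is a sketch with the hard parts explicitly deferred. The two genuinely nontrivial points you flag but do not carry out are (i) the bracket-compatibility of the first-layer map, i.e.\ showing that the extension to a graded Lie-algebra homomorphism exists, and (ii) the uniform quantitative comparison between $d_{cc}$ and $d_{cc}^u$ (and the analogous statement for flows) needed to make either the polygonal telescoping argument or the Arzel\`a--Ascoli blow-up converge. In the Carnot-manifold setting these are the substance of the theorem; in particular (ii) is exactly the Local Approximation Theorem quoted in the paper (Theorem~\ref{lat}), which is itself a cited result requiring the $C^{1,\alpha}$ smoothness hypothesis on the basis fields. Your final sentence, ``granted those quantitative comparison results, the telescoping estimate and the measure-theoretic bookkeeping are routine,'' is honest but means the proposal is not a proof so much as an identification of what must be proved.
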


\begin{thm}[{\cite{V2}}]
Let $\varphi:\mathbb M\to\widetilde{\mathbb M}$ be a contact $C^{1}$-mapping of Carnot manifolds $($in the Riemannian sense$)$. Then, it is continuously $hc$-differentiable everywhere on $\mathbb M$ $($i.~e., its $hc$-differential $\widehat{D}\varphi(u)$ is continuous on $u\in\mathbb M$$)$.
\end{thm}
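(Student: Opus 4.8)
The idea is to build the candidate horizontal homomorphism $L_u$ directly from the Riemannian differential $D\varphi(u)$, check its continuity, and then verify the asymptotic estimate of the definition by rescaling with the dilations of the local Carnot groups. Let $H^1\mathbb M\subset H^2\mathbb M\subset\dots\subset H^m\mathbb M=T\mathbb M$ be the filtration generated by the horizontal distribution, and likewise $\widetilde H^j\widetilde{\mathbb M}$. Because $\varphi$ is contact, $D\varphi(u)(H^1_u\mathbb M)\subseteq\widetilde H^1_{\varphi(u)}\widetilde{\mathbb M}$; a computation with commutator flows --- delicate because, for a merely $C^1$ map, the pushforward of a horizontal field is only continuous and so pushed-forward fields cannot be bracketed directly --- upgrades this to $D\varphi(u)(H^j_u\mathbb M)\subseteq\widetilde H^j_{\varphi(u)}\widetilde{\mathbb M}$ for every $j$. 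Hence $D\varphi(u)$ descends to a graded linear map $\operatorname{gr}T_u\mathbb M\to\operatorname{gr}T_{\varphi(u)}\widetilde{\mathbb M}$, which one checks is a homomorphism of the nilpotent tangent Lie algebras; exponentiating yields a Lie-group homomorphism $L_u$ of the local Carnot groups commuting with the dilations (by gradedness), i.e.\ a horizontal homomorphism. Set $\widehat D\varphi(u):=L_u$. Continuity in $u$ is then immediate: $u\mapsto D\varphi(u)$ is continuous since $\varphi\in C^1$, the filtration and the structure constants of the nilpotentization depend continuously on the base point by equiregularity, and all the linear-algebra and exponential operations above are continuous.

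\textbf{The estimate.} It remains to prove $\widetilde d_{cc}(\varphi(w),L_u[w])=o(d_{cc}(u,w))$ as $w\to u$ along $\mathcal G^u\mathbb M$. Fix $u$, pick a basis $X_1,\dots,X_N$ adapted to the filtration near $u$ with weights $\deg X_i$, together with a coordinate system adapted to the filtration and centered at $u$ (and analogous data at $\varphi(u)$); then $d_{cc}(u,w)\asymp\max_i|w_i|^{1/\deg X_i}$ by the Ball--Box theorem, and the dilations act by $\delta_\varepsilon(w)=(\varepsilon^{\deg X_1}w_1,\dots,\varepsilon^{\deg X_N}w_N)$. The estimate is equivalent to the locally uniform convergence $\varphi_\varepsilon:=\delta^{\varphi(u)}_{1/\varepsilon}\circ\varphi\circ\delta^u_\varepsilon\to L_u$ as $\varepsilon\to0^+$, which I would establish one weight-layer at a time. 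On the horizontal (weight-one) layer the $C^1$-Taylor expansion of $\varphi$ at $u$ suffices: the horizontal components of $\varphi(w)-L_u[w]$ are $o(|w|)$ in the Euclidean norm, hence $o(d_{cc}(u,w))$, so the horizontal part of $\varphi_\varepsilon$ converges. On the weight-$j$ layer with $j\ge 2$ the bare $C^1$-estimate is too weak, since $o(|w|)$ is only $o(\varepsilon)$ whereas that layer is rescaled by $\varepsilon^{-j}$; here the contact hypothesis enters essentially, because $\varphi$ carries horizontal curves to horizontal curves, so the weight-$j$ coordinates of $\varphi(w)$ are produced by iterated integration of the (already controlled) horizontal derivatives of $\varphi$ along a horizontal path from $u$ to $w$, exactly as the weight-$j$ coordinates of $L_u[w]$ come from the corresponding constant-coefficient data. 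A Gronwall / Baker--Campbell--Hausdorff-type comparison of these two systems of iterated integrals --- matching leading terms and absorbing the discrepancy between the manifold exponential $\exp_u$ and the exponential of the nilpotentization, which contributes only terms of anisotropic degree exceeding $j$ to the weight-$j$ layer --- propagates the convergence to that layer with the correct rate. Assembling all layers gives $\varphi_\varepsilon\to L_u$ locally uniformly, hence the claim.

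\textbf{The main obstacle.} The heart of the matter is the higher-layer part of the estimate: one must show that the contact hypothesis is precisely strong enough to cancel, in the weight-$j$ component of $\varphi(w)-L_u[w]$, every monomial of anisotropic degree strictly less than $j$ --- something $C^1$-regularity alone cannot achieve. This is an interplay between the non-commutativity encoded in the group law of the local Carnot group, the curvature-type corrections relating $\exp_u$ to its nilpotent approximation, and the differentiability of $\varphi$; once it is organized as an induction on the layer index (in the spirit of the cited work \cite{V2}), the remaining estimates are routine. A secondary point is the commutator-flow argument of the first paragraph, needed because bracketing only-continuous pushed-forward vector fields is not legitimate; a third is uniformity in $u$ for the continuity claim, obtained by re-running the estimate with constants depending continuously on $u$ over a compact neighborhood.
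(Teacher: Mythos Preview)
The paper does not prove this statement. Both occurrences --- the theorem in the Introduction and Corollary~\ref{diff_cor} in Section~3 --- are stated with attribution to~\cite{V2} and no argument is supplied; the result is simply imported as a known fact and then used (notably in the $4^{\text{th}}$~Step of the proof of Theorem~\ref{arealip}, where each auxiliary map $\eta_i$ is declared continuously $\mathcal P$-differentiable by appeal to this corollary). So there is no ``paper's own proof'' to compare against.

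That said, your outline is a reasonable reconstruction of what the proof in~\cite{V2} should look like: build the candidate $L_u$ as the graded quotient of $D\varphi(u)$, check it is a Lie-algebra (hence group) homomorphism, read off continuity from the $C^1$ hypothesis, and obtain the $o(d_{cc})$-estimate by dilation rescaling layer by layer. You have also correctly flagged the two genuine technical issues at $C^1$ regularity --- that pushed-forward horizontal fields are only continuous so one cannot bracket them directly (hence the need for a flow/commutator-of-flows argument to propagate $D\varphi(H^j)\subset\widetilde H^j$), and that the weight-$j$ component of $\varphi(w)-L_u[w]$ requires the contact condition, not just $C^1$-Taylor, to kill the sub-$j$ anisotropic terms. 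What remains a sketch rather than a proof is exactly the inductive step on the layer index: the ``Gronwall / BCH-type comparison of iterated integrals'' is the real work, and you would need to write it out carefully (this is the content of the relevant sections of~\cite{V2}). But as a plan there is nothing wrong here, and nothing to compare it to within the present paper.
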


Nevertheless, up to now, the problem on the area
formula for Lipschitz mappings of Carnot manifolds has been solved only
for some particular cases, i.~e., for mappings of Carnot groups (a
particular case of a Carnot manifold)~{\cite{pauls, mag}} and for
classes of $C^1$-smooth (in the classical sense) contact mappings
of Carnot manifolds~{\cite{vk_area}}. In~{\cite{pauls}}, the
author uses the approximation (with respect to sub-Riemannian
metric) of the initial mapping by the ``tangent'' one defined via
$\mathcal P$-differential.
The main result of {\cite{pauls}} is the following
\begin{thm}[see {\cite[Definition 2.20 and Theorem 3.3]{pauls}}.]\label{area_pauls}
Suppose that $\varphi:\mathbb G\to\widetilde{\mathbb G}$ is a Lipschitz (with respect to sub-Riemannian metrics) map of two Carnot groups. Then, for any $\mathcal H^{\nu}$-measurable set $E\subset\mathbb G$ (here $\nu$ is Hausdorff dimension of $\mathbb G$), we have
$$
\int\limits_E\mathcal J(x)\,d\mathcal H^{\nu}(x)=\int\limits_{\widetilde{\mathbb G}}\sum\limits_{x:\,x\in\varphi^{-1}(y)}\chi_E(y)\,d\mathcal H^{\nu}(y),
$$
where Jacobian $\mathcal J(x)$ equals
\begin{equation}\label{jac_pauls}
\mathcal J(x)=\lim\limits_{t\to0}\Bigl\{\frac{\mathcal H^{\nu}(\varphi(B_{cc}(y,t)))}{\mathcal H^{\nu}(B_{cc}(y,t))}\Bigl|y\in B_{cc}(x,t)\Bigr\},
\end{equation}
where Hausdorff measures are constructed with respect to $d_{cc}$.
\end{thm}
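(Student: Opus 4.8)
The plan is to follow the classical three–step scheme for area formulas — (i) a Luzin-type property $\mathcal N$, (ii) a bi-Lipschitz linearization of $\varphi$ on small pieces, (iii) a patching argument — with the Euclidean differential replaced by the Pansu (= $hc$-) differential $\widehat D\varphi(x)$, which is a homogeneous homomorphism of Carnot groups, and with $\mathcal H^{\nu}$ playing the role of Lebesgue measure. First I would reduce, by inner regularity of $\mathcal H^{\nu}$ and a monotone-class argument, to the case where $E$ is a bounded Borel set. Since $\varphi$ is $L$-Lipschitz with respect to $d_{cc}$, one has $\mathcal H^{\nu}(\varphi(A))\le L^{\nu}\,\mathcal H^{\nu}(A)$ for every set $A$; this immediately yields property $\mathcal N$ (images of $\mathcal H^{\nu}$-null sets are $\mathcal H^{\nu}$-null) and, together with $\sigma$-compactness, the measurability of $\varphi(A)$ for measurable $A$ and of the multiplicity function $y\mapsto N(\varphi,E,y)$, the number of points of $E$ carried by $\varphi$ to $y$.

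Next, by the $hc$-differentiability theorem stated above (which for Carnot groups is Pansu's theorem), $\varphi$ is $\mathcal P$-differentiable off an $\mathcal H^{\nu}$-null set $\Sigma$, and I would prove as a separate lemma that at each point of differentiability the metric density (\ref{jac_pauls}) exists and equals the \emph{algebraic Jacobian} of $\widehat D\varphi(x)$ — the factor by which the homomorphism $\widehat D\varphi(x)$ dilates Haar measure on $\mathbb G$ — computed from the determinant of the linear map induced by $\widehat D\varphi(x)$ on the horizontal layer (raised to the appropriate powers over the graded layers), and vanishing exactly when $\widehat D\varphi(x)$ fails to be injective. Split $E\setminus\Sigma$ into $E_{0}=\{x:\mathcal J(x)=0\}$ and $E_{+}=\{x:\mathcal J(x)>0\}$. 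Near each point of $E_{0}$ the image of a $cc$-ball of radius $r$ has $\mathcal H^{\nu}$-measure $o(r^{\nu})$, so a Vitali covering argument gives $\mathcal H^{\nu}(\varphi(E_{0}))=0$; hence $\int_{E_{0}}\mathcal J\,d\mathcal H^{\nu}=0$ and, since $N(\varphi,E_{0},\cdot)$ is supported on the null set $\varphi(E_{0})$, the contribution of $E_{0}$ to the right-hand side vanishes as well.

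On $E_{+}$ comes the core step. Fixing $\varepsilon>0$, I would combine Egorov's theorem with the definition of $hc$-differentiability and the left-invariance and dilation structure of $\mathbb G$ to decompose $E_{+}$, up to an $\mathcal H^{\nu}$-null set, into countably many disjoint bounded Borel pieces $F_{j}$ with base points $x_{j}$, on each of which $\varphi$ is comparable, with multiplicative error at most $\varepsilon$, to the homomorphism $w\mapsto\varphi(x_{j})\cdot\widehat D\varphi(x_{j})[x_{j}^{-1}\cdot w]$; shrinking $\varepsilon$ (and the $F_{j}$) further makes $\widehat D\varphi(x_{j})$ bi-Lipschitz onto its image on the relevant scale, so $\varphi|_{F_{j}}$ is injective and $(1-C\varepsilon)\mathcal J(x_{j})\mathcal H^{\nu}(F_{j})\le\mathcal H^{\nu}(\varphi(F_{j}))\le(1+C\varepsilon)\mathcal J(x_{j})\mathcal H^{\nu}(F_{j})$, while $\int_{F_{j}}\mathcal J\,d\mathcal H^{\nu}=(1+O(\varepsilon))\mathcal J(x_{j})\mathcal H^{\nu}(F_{j})$ by continuity of $\mathcal J$ off $\Sigma$. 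Summing over $j$, using additivity of $N$ over the disjoint $F_{j}$ (and property $\mathcal N$ to discard overlaps of their closures) and letting $\varepsilon\to0$ through a refining sequence, yields $\int_{E_{+}}\mathcal J\,d\mathcal H^{\nu}=\int_{\widetilde{\mathbb G}}N(\varphi,E_{+},y)\,d\mathcal H^{\nu}(y)$; adding the $E_{0}$ and $\Sigma$ contributions gives the asserted identity. The main obstacle is precisely this bi-Lipschitz linearization: one must show that the graded homomorphism $\widehat D\varphi(x)$, when nondegenerate, is bi-Lipschitz from a neighbourhood of the identity onto its image in $\widetilde{\mathbb G}$ with constants locally uniform in $x$, and that the $o(d_{cc})$-term in the definition of $hc$-differentiability can be upgraded, via Egorov, to a uniform estimate on each $F_{j}$ compatible with the group dilations — this is where the sub-Riemannian setting genuinely departs from the Euclidean and Riemannian ones, since the tangent map lives in a nilpotent graded group rather than a vector space and the $cc$-quasimetric is only locally, not globally, comparable to a homogeneous norm.
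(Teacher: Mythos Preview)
Your proposal is essentially correct and follows the classical Pauls--Magnani route for Carnot groups: Pansu differentiability a.e., property~$\mathcal N$, a Sard-type lemma on the degenerate set, and a bi-Lipschitz linearization on pieces where the Pansu differential is nondegenerate. This is, in fact, the original argument the theorem is citing.

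The paper, however, does not prove Theorem~\ref{area_pauls} directly at all: it is quoted as a known result. The paper's own path to this statement is indirect --- it proves the more general area formula for Lipschitz maps of \emph{Carnot manifolds} (Theorem~\ref{arealip}), and then in the Remark following Theorem~\ref{area_main} specializes to Carnot groups by observing that the $d_{cc}$- and $d_2$-Hausdorff measures are mutually absolutely continuous with constant Radon--Nikodym derivatives (by left-invariance), so the Jacobian~\eqref{jac_pauls} coincides with $\mathcal J^{SR}$ up to these constants. The proof of Theorem~\ref{arealip} itself is not a linearization-and-patching argument: it computes the density $\Phi'(g)$ of the pushforward measure by comparing $\varphi$ with the auxiliary smooth maps $\eta_i(w)=\widehat D\varphi(y_i)[w]$ acting between \emph{local} Carnot groups, invoking the already-established area formula for smooth contact maps (Theorem~\ref{th_area}) on each $\eta_i$, and using a Vitali-type covering of the preimage rather than of the image.

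What your approach buys is simplicity and self-containment in the group case: global left-translations and dilations mean the Pansu differential is a genuine homogeneous homomorphism of the ambient groups, so the bi-Lipschitz comparison is clean and no local-approximation theorem is needed. What the paper's approach buys is generality: on a Carnot manifold the tangent cone varies from point to point and the quasimetrics $d_2$ and $d_2^u$ are only asymptotically equal, so one cannot globally compose $\varphi$ with a single group translation; the density-derivative strategy sidesteps this. One small imprecision in your sketch: the algebraic Jacobian of a graded homomorphism is the product of the determinants of the induced linear maps on \emph{each} layer $H_j/H_{j-1}$, not the horizontal determinant raised to weighted powers.
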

In {\cite{mag}}, the main idea is to use the local
$\mathcal H^{\nu}$-measure distortion under $\varphi$ as a Jacobian:
\begin{thm}[see {\cite[Definition 10 and Theorem 4.4]{mag}}.]\label{area_mag}
Suppose that $\varphi:\mathbb G\to\widetilde{\mathbb G}$ is a Lipschitz (with respect to sub-Riemannian metrics) map of two Carnot groups. Then, for any $\mathcal H^{\nu}$-measurable set $E\subset\mathbb G$ (here $\nu$ is Hausdorff dimension of $\mathbb G$), we have
$$
\int\limits_E\mathcal J_{\nu}(\widehat{D}\varphi(x))\,d\mathcal H^{\nu}(x)=\int\limits_{\widetilde{\mathbb G}}\sum\limits_{x:\,x\in\varphi^{-1}(y)}\chi_E(y)\,d\mathcal H^{\nu}(y),
$$
where Jacobian $\mathcal J_{\nu}(\widehat{D}\varphi(x))$ equals
\begin{equation}\label{jac_mag}
\mathcal J_{\nu}(\widehat{D}\varphi(x))=\frac{\mathcal H^{\nu}(\widehat{D}\varphi(y)[B_{cc}(0,1)])}{\mathcal H^{\nu}(B_{cc}(0,1))},
\end{equation}
where Hausdorff measures are constructed with respect to $d_{cc}$.
\end{thm}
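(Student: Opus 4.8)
The plan is to run the classical Federer scheme for the area formula \cite{F}, replacing affine approximations everywhere by the $hc$-differential and exploiting that the dilations of a Carnot group turn the latter into an \emph{exact}, scale-invariant measure distortion. First I discard negligible sets. By the $hc$-differentiability theorem quoted above (for groups this is Pansu's theorem \cite{pan1}) the set $N$ on which $\varphi$ is not $hc$-differentiable has $\mathcal H^\nu(N)=0$; since $\varphi$ is $L$-Lipschitz for $d_{cc}$ it carries every ball into a ball of comparable radius, so $\mathcal H^\nu(\varphi(A))\le L^\nu\,\mathcal H^\nu(A)$ for every $A$, whence $\mathcal H^\nu(\varphi(N))=0$. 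Thus $N$ contributes to neither side and I may assume $\varphi$ is $hc$-differentiable on all of $E$; the multiplicity function $y\mapsto\#(\varphi^{-1}(y)\cap E)$ is $\mathcal H^\nu$-measurable (standard: decompose $E$ into countably many small bounded Borel pieces and use that continuous images of compacta are compact, hence Borel). Next, applying Lusin's and Egorov's theorems to the $o(d_{cc})$ estimate in the definition of $hc$-differentiability, I cover $E$ up to an $\mathcal H^\nu$-null set by countably many pairwise disjoint bounded Borel sets $E_k$ on each of which there are a radius $r_k>0$ and a parameter $\eta_k>0$, with $\eta_k$ as small as desired at the cost of refining the decomposition, such that the \emph{uniform $hc$-differentiability estimate}
$$
\widetilde d_{cc}\big(\varphi(q),\,\varphi(p)\cdot L_p[\,p^{-1}q\,]\big)\le\eta_k\,d_{cc}(p,q),\qquad L_p:=\widehat D\varphi(p),
$$
holds for all $p\in E_k$ and all $q$ with $d_{cc}(p,q)<r_k$; refining further I also arrange that all the $L_p$ with $p\in E_k$ lie in one fixed compact subset of the finite-dimensional space of horizontal homomorphisms, and that either $\mathcal J_\nu(L_p)\ge 1/k$ for every $p\in E_k$ (a non-degenerate piece) or $\mathcal J_\nu(L_p)=0$ for every $p\in E_k$ (a degenerate piece).

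On a non-degenerate piece I would first prove the sub-lemma that a horizontal homogeneous homomorphism $L$ with $\mathcal J_\nu(L)>0$ is bi-Lipschitz from $(\mathbb G,d_{cc})$ onto its image in $(\widetilde{\mathbb G},\widetilde d_{cc})$, and that $\mathcal H^\nu(L[A])=\mathcal J_\nu(L)\,\mathcal H^\nu(A)$ for every measurable $A$ — the last identity holding because, by the bi-Lipschitz property, the restriction of $\mathcal H^\nu$ to $L(\mathbb G)$ is comparable to Haar measure there, $L$ intertwines the dilations, and the proportionality constant is identified by taking $A=B_{cc}(0,1)$. Since on $E_k$ the $L_p$ form a compact family with $\mathcal J_\nu(L_p)\ge 1/k$, their bi-Lipschitz constants are uniformly bounded, and, choosing $\eta_k$ small relative to these, the uniform estimate above (with the base point taken to be one of the two points) shows that $\varphi|_{E_k}$ is bi-Lipschitz; after a further countable splitting $\varphi|_{E_k}$ is injective. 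Consequently $A\mapsto\mathcal H^\nu(\varphi(A))$, $A\subset E_k$, is a Radon measure absolutely continuous with respect to $\mathcal H^\nu$ restricted to $E_k$, and by Lebesgue differentiation its density at $\mathcal H^\nu$-a.e.\ point $p$ equals $\lim_{s\to0}\mathcal H^\nu(\varphi(E_k\cap B_{cc}(p,s)))/\mathcal H^\nu(E_k\cap B_{cc}(p,s))$. At an $\mathcal H^\nu$-density point $p$ of $E_k$ the set $E_k\cap B_{cc}(p,s)$ exhausts $B_{cc}(p,s)$ as $s\to0$, and comparing $\varphi$ with $\Lambda_p\colon q\mapsto\varphi(p)\cdot L_p[p^{-1}q]$ on this ball — using that $\Lambda_p$ is $1$-homogeneous so $\mathcal H^\nu(\Lambda_p[E_k\cap B_{cc}(p,s)])=\mathcal J_\nu(L_p)\,\mathcal H^\nu(E_k\cap B_{cc}(p,s))$, and then sending $\eta_k\to0$ to kill the $\eta_k s$-fuzz from the uniform estimate — one gets that this density equals $\mathcal J_\nu(\widehat D\varphi(p))$. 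Hence, by injectivity of $\varphi|_{E_k}$,
$$
\int_{E_k}\mathcal J_\nu\big(\widehat D\varphi(p)\big)\,d\mathcal H^\nu(p)=\mathcal H^\nu(\varphi(E_k))=\int_{\widetilde{\mathbb G}}\#\big(\varphi^{-1}(y)\cap E_k\big)\,d\mathcal H^\nu(y).
$$

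On a degenerate piece the Jacobian $\mathcal J_\nu(\widehat D\varphi(\cdot))$ vanishes identically on $E_k$, so the left-hand side contributes nothing over $E_k$, and it remains to prove $\mathcal H^\nu(\varphi(E_k))=0$ so that the right-hand side contributes nothing either. This is the Sard-type step. Now $\mathcal J_\nu(L_p)=0$ forces $L_p(\mathbb G)$ to be a proper horizontally generated homogeneous subgroup of $\widetilde{\mathbb G}$ of Hausdorff dimension $\nu'=\nu-\dim_H(\ker L_p)<\nu$, so the bounded set $L_p[B_{cc}(0,1)]$ can be covered, for any $\varepsilon>0$, by at most $C\varepsilon^{-\nu'}$ balls of radius $\varepsilon$; rescaling by the dilation of ratio $s$ and using the uniform estimate above to absorb the $\eta_k s$-error, one covers $\varphi(E_k\cap B_{cc}(p,s))$, for each $p\in E_k$ and all small $s$, by at most $C\varepsilon^{-\nu'}$ balls of radius $2\varepsilon s$; hence $\mathcal H^\nu_{2\varepsilon s}(\varphi(E_k\cap B_{cc}(p,s)))\le C'\varepsilon^{\nu-\nu'}\mathcal H^\nu(B_{cc}(p,s))$. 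Covering $E_k$ efficiently by such balls at a fixed small scale $s$ via a Vitali argument and then letting $\varepsilon\downarrow0$ gives $\mathcal H^\nu(\varphi(E_k))=0$. Summing the identities over all pieces $E_k$, and reducing beforehand to bounded $E$ by monotone convergence, yields the asserted equality.

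The step I expect to be the main obstacle is the non-degenerate sub-lemma together with the passage from the pointwise $hc$-differentiability estimate to the uniform bi-Lipschitz comparison of $\varphi$ with the $L_p$ in the \emph{sub-Riemannian} quasimetric: one must control the behaviour of $\varphi$ under the dilations uniformly over a set of positive measure, deal with the fact that in the blow-up picture $\widetilde d_{cc}$ is only a quasimetric (so triangle-type inequalities carry universal constants), and, above all, prove that an injective horizontal homogeneous homomorphism is genuinely bi-Lipschitz onto its image — an infinitesimal fact which in the group case reduces to injectivity on the first stratum plus the Ball--Box theorem, but which is precisely the point where one really needs the algebraic structure of Carnot groups rather than only metric measure theory. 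The Sard-type estimate for degenerate $L_p$ is technically parallel and should cause no independent trouble once the covering--rescaling lemma is in place, and the remaining bookkeeping — measurability of multiplicities, the Vitali coverings, the limits in $\eta_k$ and $\varepsilon$ — is routine.
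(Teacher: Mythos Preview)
Your proposal is essentially correct and is in fact close to the direct arguments of Magnani~\cite{mag} and Pauls~\cite{pauls}; it differs, however, from the way the present paper handles this statement. The paper does not re-prove Theorem~\ref{area_mag} from scratch: it cites it as prior work and then, in the Remark following Theorem~\ref{area_main}, recovers it as a specialization of the main manifold result Theorem~\ref{arealip}. The proof of Theorem~\ref{arealip} in turn uses the area formula for $C^1$ contact mappings (Theorem~\ref{th_area}) as a black box --- at each covering center $y_i$ the $hc$-differential $\eta_i=\widehat D\varphi(y_i)$ is a $C^1$ contact map of local Carnot groups, so the smooth formula computes $\omega_\nu r_i^\nu$ exactly via the Jacobian and $\widehat{\mathcal H^\nu}^{y_i}(\eta_i^{-1}(\operatorname{Box}_2(x_i,r_i)))$ --- and the rest is a covering/density comparison of these $\eta_i^{-1}$-pull-backs with the $\varphi^{-1}$-preimages, using the Local Approximation Theorem~\ref{lat} to pass between $d_2$ and $d_2^u$. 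Your Federer scheme instead exploits what is special to groups (exact dilation homogeneity, left-invariant Haar/Hausdorff measure, the fact that $L_p$ is a global group homomorphism rather than a map between \emph{varying} local groups) and thereby avoids both Theorem~\ref{th_area} and Theorem~\ref{lat} entirely; this buys simplicity. What the paper's detour buys is generality: on a Carnot manifold your bi-Lipschitz sub-lemma for $L_p$ no longer yields a direct comparison between $\varphi$ and $\widehat D\varphi(p)$ in the ambient quasimetric, and the $C^1$ area formula together with Theorem~\ref{lat} is precisely what bridges that gap.

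One minor wording issue: in the density step you write ``sending $\eta_k\to0$'', but $\eta_k$ is fixed once $E_k$ is chosen. Either note that the Radon--Nikod\'ym density at $p$ is intrinsic and your bound $|\Phi'(p)-\mathcal J_\nu(L_p)|\le C\eta$ holds for every refinement parameter $\eta$, or --- more directly --- use the \emph{pointwise} $hc$-differentiability at $p$, which upgrades $\eta_k\,d_{cc}(p,q)$ to $o(d_{cc}(p,q))$ as $q\to p$ and makes the limit exact without any refinement.
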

Finally, in~{\cite{vk_area}}, the
sub-Riemannian area formula is derived via the Riemannian one. The result is
\begin{thm}
[The Area Formula for Smooth Mappings
{\cite{vk_area}}]\label{area_smooth} {\it Let $\varphi:\mathbb
M\to\widetilde{\mathbb M}$ be a contact $C^1$-mapping. Then the area formula
$$
\int\limits_{\mathbb
M}f(x)\mathcal J^{SR}(\varphi,x)\,d\mathcal
H^{\nu}(x)=\int\limits_{\widetilde{\mathbb
M}}\sum\limits_{x:\,x\in\varphi^{-1}(y)\cap U}f(x)\,d\mathcal
H^{\nu}(y),
$$
where $f:\mathbb M\to\mathbb E$ $($here $\mathbb E$ is an
arbitrary Banach space$)$ is such that the function
$f(x)\mathcal J^{SR}(\varphi,x)$ is
integrable, and
\begin{equation}\label{jac_sr}
\mathcal J^{SR}(\varphi,x)=\sqrt{\det(\widehat{D}\varphi(x)^*\widehat{D}\varphi(x))}
\end{equation}
is the sub-Riemannian Jacobian of $\varphi$ at $x$, is valid. Here the Hausdorff measures are constructed
with respect to metrics $d_2$ and $\widetilde{d}_2$ with the
multiple~$\omega_{\nu}$.}
\end{thm}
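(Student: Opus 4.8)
The plan is to deduce the formula from the classical (Riemannian) area formula {\cite{F}}, which applies to $\varphi$ directly because $\varphi$ is $C^{1}$ in the Riemannian sense. Write $\mathcal H^{n}_{R}$ for the Riemannian volume on $\mathbb M$, $\widetilde{\mathcal H}^{n}_{R}$ for the $n$-dimensional ($n=\dim\mathbb M$) Riemannian Hausdorff measure on $\widetilde{\mathbb M}$, and $\mathcal J_{R}(\varphi,x)=\sqrt{\det(D\varphi(x)^{*}D\varphi(x))}$ for the Riemannian Jacobian; then $\int_{\mathbb M}g\,\mathcal J_{R}(\varphi,\cdot)\,d\mathcal H^{n}_{R}=\int_{\widetilde{\mathbb M}}\sum_{x\in\varphi^{-1}(y)\cap U}g(x)\,d\widetilde{\mathcal H}^{n}_{R}(y)$ for every integrable $g$. (For definiteness I describe the equidimensional case $\dim\mathbb M=\dim\widetilde{\mathbb M}$; the general case is analogous, working on the equiregular image submanifold $\varphi(\mathbb M)$, on which both sides of the asserted formula are supported.) The three steps are: (i) express $\mathcal H^{\nu}$ on $\mathbb M$ and $\widetilde{\mathcal H}^{\nu}$ on $\widetilde{\mathbb M}$ through the corresponding Riemannian Hausdorff measures by positive continuous densities; (ii) substitute into the Riemannian area formula an appropriate $g$ built from $f$ and these densities; (iii) identify the pointwise factor that survives with $\mathcal J^{SR}(\varphi,\cdot)$.

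For step (i) one uses the measure-comparison estimates for equiregular Carnot manifolds: by the Ball--Box theorem and the existence of the nilpotent tangent cone, $\mathcal H^{n}_{R}(B_{cc}(u,r))=\theta(u)\,r^{\nu}(1+o(1))$ as $r\to0$, with $u\mapsto\theta(u)$ positive and continuous, whence, with the $\omega_{\nu}$-normalisation fixed in the statement, $\mathcal H^{\nu}=\omega_{\nu}\theta^{-1}\mathcal H^{n}_{R}$ on $\mathbb M$ and likewise $\widetilde{\mathcal H}^{\nu}=\omega_{\nu}\widetilde\theta^{-1}\widetilde{\mathcal H}^{n}_{R}$ on $\widetilde{\mathbb M}$. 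Carrying out (ii) with $g(x)=f(x)\,\theta(x)^{-1}\widetilde\theta(\varphi(x))$ and rewriting both sides in terms of $\mathcal H^{\nu}$, $\widetilde{\mathcal H}^{\nu}$, the theorem reduces to the pointwise identity $\mathcal J^{SR}(\varphi,x)=\dfrac{\widetilde\theta(\varphi(x))}{\theta(x)}\,\mathcal J_{R}(\varphi,x)$ for $\mathcal H^{\nu}$-a.e.\ $x$.

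For step (iii) I would fix $x$, pass to graded (privileged) coordinates adapted to the filtration at $x$ and at $\varphi(x)$, and use that $\varphi$ is continuously $hc$-differentiable by the theorem of {\cite{V2}} recalled above: the matrix of $D\varphi(x)$ in such coordinates is block lower-triangular with respect to the grading, its block-diagonal part is exactly the graded Lie-algebra homomorphism induced by $\widehat D\varphi(x)$, and the rescalings of $\varphi$ by the Carnot dilations converge to $\widehat D\varphi(x)$ uniformly on compacta. Hence $\mathcal J_{R}(\varphi,x)$ differs from the Haar-measure distortion coefficient of $\widehat D\varphi(x)$ only through the Gram factors contributed by the off-diagonal blocks of $D\varphi(x)$, and these are precisely what is absorbed into the density ratio $\widetilde\theta(\varphi(x))/\theta(x)$; on the other hand the Haar-measure distortion of $\widehat D\varphi(x)$ equals $\sqrt{\det(\widehat D\varphi(x)^{*}\widehat D\varphi(x))}$, because the homogeneous norms defining $d_{2}^{x}$ and $\widetilde d_{2}^{\varphi(x)}$ are built from the same graded inner products entering $\widehat D\varphi(x)^{*}\widehat D\varphi(x)$ and the $\omega_{\nu}$-factors cancel. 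This gives the identity, hence the theorem.

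The main obstacle is this pointwise identity: making the cancellation between the off-diagonal blocks of $D\varphi(x)$ and the density ratio $\widetilde\theta(\varphi(x))/\theta(x)$ precise is where the $C^{1}$-contact hypothesis is genuinely needed, since it provides continuity of the full $1$-jet of $\varphi$ along the filtration, which in turn is what makes the blow-up hold pointwise with the correct limit and identifies $\theta,\widetilde\theta$ as the relevant densities. A secondary point to settle beforehand is the mutual compatibility of the $\omega_{\nu}$-normalisations of $\mathcal H^{\nu}$ on the two manifolds with the normalisation implicit in writing the Jacobian as $\sqrt{\det(\widehat D\varphi^{*}\widehat D\varphi)}$; this is routine but must be checked.
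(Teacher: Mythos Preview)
The paper does not itself prove this statement: Theorem~\ref{area_smooth} (and its restatement as Theorem~\ref{th_area}) is quoted from \cite{vk_area} and used as a black box in the proof of the main result, Theorem~\ref{arealip}. The only information the paper gives about its proof is the sentence in the introduction: ``Finally, in~\cite{vk_area}, the sub-Riemannian area formula is derived via the Riemannian one.'' Your proposal is precisely this strategy---start from the classical Riemannian area formula for the $C^{1}$ map $\varphi$, convert both sides to $\mathcal H^{\nu}$ via continuous densities, and identify the resulting pointwise factor with $\mathcal J^{SR}$---so at the level of the plan you are aligned with the cited source.

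One caution on your Step~(iii). In adapted frames the contact condition makes $D\varphi(x)$ block \emph{lower}-triangular with diagonal blocks equal to the graded pieces of $\widehat D\varphi(x)$; consequently, in the equidimensional case $|\det D\varphi(x)|$ is already the product of the determinants of the diagonal blocks, and the off-diagonal blocks contribute \emph{nothing} to $\mathcal J_{R}$. So the cancellation you describe---off-diagonal Gram factors being ``absorbed'' into $\widetilde\theta(\varphi(x))/\theta(x)$---is not the mechanism. Rather, the density ratio accounts for the discrepancy between the Riemannian volume element written in the chosen frames $\{X_i\}$, $\{\widetilde X_j\}$ (which need not be orthonormal) and the $d_2$-Hausdorff normalisation $\omega_\nu$; once the frames are taken orthonormal on each layer, the ratio is a constant depending only on the dimensions $n_j,\tilde n_j$, and the identity $\mathcal J^{SR}=\mathcal J_{R}$ holds directly. (Relatedly, double-check the orientation of your ratio: with your convention $\mathcal H^{\nu}=\omega_\nu\theta^{-1}\mathcal H^{n}_R$, the substitution forces $\mathcal J^{SR}=(\theta/\widetilde\theta\!\circ\!\varphi)\,\mathcal J_{R}$, the reciprocal of what you wrote.) These are bookkeeping issues; the overall route is the right one and matches what the paper attributes to \cite{vk_area}.
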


Note that the definition {\eqref{jac_sr}} of the sub-Riemannian Jacobian (that is, its analytic expression via the values of the $hc$-differential) is new even for mappings of Carnot groups.

On the one hand, in view of non-differentiability of Lipschitz in the sub-Riemannian sense mappings, it is impossible to derive the
sub-Riemannian area formula for {\it
arbitrary} Lipschitz mappings of Carnot manifolds via the Riemannian one. On the other
hand, the $hc$-differential $\widehat{D}\varphi$ of a
mapping~$\varphi:\mathbb M\to\widetilde{M}$ at arbitrary point $u$ acts on local Carnot groups $\mathcal G^u\mathbb M$ and $\mathcal G^{\varphi(u)}\widetilde{\mathbb M}$, in which the
sub-Riemannian metrics are not equivalent to the ones in a Carnot
manifold~{\cite{Var}}, thus the relation $d_{\mathbb
M}(\varphi(w),\varphi(v))=(1+o(1))\widehat{d}_{\mathcal
G^{\varphi(u)}}(\widehat{D}\varphi(u)[v],
\widehat{D}\varphi(u)[w])$, where $o(1)\to0$ as $v,w\to u$, near
the point~$u$ cannot be obtained.

In this paper, we give a new
approach to investigation of Lipschitz mappings of Carnot
manifolds based on its $hc$-differentiability only, and its ``partial''
approximation by a ``tangent'' mapping. Such approach is new even for mappings of Euclidean spaces. We prove the area formula
for Lipschitz mappings of Carnot manifolds (see also Theorem~{\ref{arealip}} below):
\begin{thm}\label{area_main}
 {\it Suppose that $D\subset\mathbb M$ is a
measurable set, and the mapping $\varphi:D\to\widetilde{\mathbb M}$ is
Lipschitz with respect to sub-Riemannian quasimetrics $d_2$ and
$\widetilde{d}_2$. Then the area formula
\begin{equation}\label{mainarea}
\int\limits_{D}f(x)\mathcal J^{SR}(\varphi,x)\,d\mathcal
H^{\nu}(x)=\int\limits_{\varphi(D)}\sum\limits_{x:\,x\in\varphi^{-1}(y)}f(x)\,d\mathcal
H^{\nu}(y),
\end{equation}
where $f:D\to\mathbb E$ $($here $\mathbb E$ is an
arbitrary Banach space$)$ is such that the function
$f(x)\mathcal J^{SR}(\varphi,x)$ is
integrable, and the sub-Riemannian Jacobian is the same as in {\eqref{jac_sr}}, is valid. Here the Hausdorff measures are constructed
with respect to metrics $d_2$ and $\widetilde{d}_2$ with the
multiple~$\omega_{\nu}$.}
\end{thm}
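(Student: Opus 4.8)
The plan is to follow the classical Federer scheme for the area formula, with the linear differential systematically replaced by the $hc$-differential $\widehat D\varphi$ and Rademacher's theorem replaced by the $hc$-differentiability a.e.\ of sub-Riemannian Lipschitz mappings (the first theorem of \cite{V2} quoted above).

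First, by linearity in $f$ and monotone convergence it suffices to prove \eqref{mainarea} for $f=\chi_A$ with $A\subset D$ measurable; as $\varphi$ is continuous, $\varphi(A)$ and the level sets $\varphi^{-1}(y)\cap A$ are Suslin, hence $\mathcal H^\nu$-measurable, and the right-hand side becomes $\int_{\widetilde{\mathbb M}}\mathcal N(\varphi|_A,y)\,d\mathcal H^\nu(y)$ with $\mathcal N(\varphi|_A,\cdot)$ the Banach indicatrix. Write $A=Z\sqcup(A\setminus Z)$, $Z=\{x\in A:\mathcal J^{SR}(\varphi,x)=0\}$. For the $Z$-contribution one must show $\mathcal H^\nu(\varphi(Z))=0$: at a.e.\ $x\in Z$ the homomorphism $\widehat D\varphi(x)$ of local Carnot groups satisfies $\det(\widehat D\varphi(x)^*\widehat D\varphi(x))=0$, so its image is a homogeneous subgroup of homogeneous dimension $<\nu$ and $\mathcal H^\nu(\widehat D\varphi(x)[B_{cc}(0,1)])=0$; combining this with the defining remainder estimate of $hc$-differentiability and the Lipschitz bound, a Vitali covering of $Z$ by sub-Riemannian balls whose images are squeezed into sets of arbitrarily small $\nu$-measure yields $\mathcal H^\nu(\varphi(Z))=0$, so both sides of \eqref{mainarea} vanish on $Z$.

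On $A\setminus Z$ I would invoke $hc$-differentiability a.e., measurability of $x\mapsto\widehat D\varphi(x)$, and Egorov's theorem to decompose $A\setminus Z=N_0\cup\bigcup_i A_i$ with $\mathcal H^\nu(N_0)=0$ and pairwise disjoint $A_i$ of small diameter on which: (a) $x\mapsto\widehat D\varphi(x)$ is uniformly continuous and $\mathcal J^{SR}(\varphi,x)\ge c_i>0$; (b) a single modulus $\omega_i(t)\to0$ bounds the $hc$-remainder, $\widetilde d_2(\varphi(w),\widehat D\varphi(u)[w])\le\omega_i(d_2(u,w))\,d_2(u,w)$ for $u,w\in A_i$, after the standard identification of a neighbourhood of $u$ in $\mathbb M$ with a neighbourhood of the unit in $\mathcal G^u\mathbb M$. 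Since an injective homogeneous homomorphism of a local Carnot group is bi-Lipschitz onto its image, (a)--(b) with $A_i$ small force $\varphi|_{A_i}$ to be bi-Lipschitz onto its image with respect to $d_2,\widetilde d_2$; in particular each $\varphi|_{A_i}$ is injective. Hence, using $\mathcal J^{SR}=0$ on $Z$, $\mathcal H^\nu(N_0)=\mathcal H^\nu(\varphi(N_0))=0$, and Tonelli's theorem, \eqref{mainarea} for $f=\chi_A$ reduces to proving $\mathcal H^\nu(\varphi(E))=\int_E\mathcal J^{SR}(\varphi,x)\,d\mathcal H^\nu(x)$ for every measurable $E\subset A_i$.

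This last identity is the heart of the matter, and I would obtain it from the infinitesimal statement that at a.e.\ density point $x\in A_i$
$$\lim_{r\to0}\frac{\mathcal H^\nu\bigl(\varphi(B_{d_2}(x,r)\cap A_i)\bigr)}{\mathcal H^\nu\bigl(B_{d_2}(x,r)\cap A_i\bigr)}=\mathcal J^{SR}(\varphi,x),$$
followed by integration over a Vitali covering of $E$ by such balls with vanishing mesh. The ratio is computed by rescaling: in normal-type charts the dilated maps $\delta_{1/r}\circ(\cdot)\circ\delta_r$ converge uniformly on compacta to $\widehat D\varphi(x)$, so the rescaled image converges in the Hausdorff sense to $\widehat D\varphi(x)[B_{cc}(0,1)]$, whose normalized $\nu$-measure equals $\sqrt{\det(\widehat D\varphi(x)^*\widehat D\varphi(x))}=\mathcal J^{SR}(\varphi,x)$ --- the same measure-distortion computation underlying Theorem~\ref{area_smooth}, coinciding in the group case with the ratio-type Jacobians of Theorems~\ref{area_pauls} and~\ref{area_mag}. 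One then assembles $\mathcal H^\nu(\varphi(A_i))=\int_{A_i}\mathcal J^{SR}(\varphi,\cdot)\,d\mathcal H^\nu$, sums over $i$, adds the vanishing $Z$-term, and passes back from $\chi_A$ to general $f$. The main obstacle is exactly the verification of the displayed ratio: unlike in Euclidean or Riemannian analysis, $\varphi$ near $u$ is \emph{not} $(1+o(1))$-bi-Lipschitz to $\widehat D\varphi(u)$, since the sub-Riemannian metrics on $\mathcal G^u\mathbb M$ and $\mathcal G^{\varphi(u)}\widetilde{\mathbb M}$ are not equivalent to those of $\mathbb M$ and $\widetilde{\mathbb M}$; overcoming this requires the ``partial'' approximation of $\varphi$ by its tangent homomorphism together with a uniform-over-$A_i$ rescaling and covering analysis that keeps the $hc$-remainder $\omega_i$, the chart distortions, and the metric discrepancies simultaneously small.
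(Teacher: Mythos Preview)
Your overall architecture---Federer reduction to $f=\chi_A$, splitting off the degenerate set $Z$, an Egorov-type decomposition into pieces on which the $hc$-differential is uniformly continuous with uniform remainder, reduction to a density statement at a.e.\ point, and integration via Vitali---matches the paper's proof. The treatment of $Z$ and the bi-Lipschitz decomposition are essentially the same.

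The divergence is in how you establish the displayed density limit, and here there is a real gap. Your plan is to rescale and argue that $\delta_{1/r}\circ\varphi\circ\delta_r\to\widehat D\varphi(x)$ uniformly, hence the rescaled image Hausdorff-converges to $\widehat D\varphi(x)[B]$, hence its $\mathcal H^\nu$-measure converges. But Hausdorff convergence by itself does not give convergence of $\mathcal H^\nu$; in the Euclidean and Riemannian cases it does because $\varphi$ is $(1+o(1))$-bi-Lipschitz equivalent to its differential near $x$, and that is exactly what you correctly flag as unavailable here. You then invoke ``partial approximation'' and ``covering analysis'' without saying what these mean concretely, and the rescaling route as written does not cross the gap.

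The paper's device is different and is the actual content of ``partial approximation''. From $hc$-differentiability together with the bi-Lipschitz bound on $A_\varepsilon$ one extracts only the \emph{radial} relation
\[
\widetilde d_2(\varphi(w),\varphi(u))\,(1+o(1))=\widetilde d_2\bigl(\widehat D\varphi(u)[w],\varphi(u)\bigr),
\]
i.e.\ distances from the \emph{center} $\varphi(u)$ agree, not distances between arbitrary pairs. This is too weak to compare $\varphi$ with the tangent map $\eta_u:w\mapsto\widehat D\varphi(u)[w]$ as maps, but it is exactly enough to compare the preimages $\varphi^{-1}(\mathrm{Box}_2(\varphi(u),t))\cap A_\varepsilon$ and $\eta_u^{-1}(\mathrm{Box}_2(\varphi(u),t))$ of a single ball centered at $\varphi(u)$. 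Accordingly the paper covers the \emph{image} $\varphi(\mathrm{Box}_2(g,r)\cap A_\varepsilon)$ (not the domain) by balls $\mathrm{Box}_2(x_i,r_i)$ with $x_i=\varphi(y_i)$; for each ball it applies the smooth area formula (Theorem~\ref{th_area}) to the $C^1$ contact map $\eta_i$ to write
\[
\omega_\nu r_i^{\nu}=\mathcal J^{SR}(\varphi,y_i)\cdot\widehat{\mathcal H^\nu}^{\,y_i}\bigl(\eta_i^{-1}(\mathrm{Box}_2(x_i,r_i))\bigr);
\]
the radial relation then gives $\mathcal H^\nu\bigl(\eta_i^{-1}(\mathrm{Box}_2(x_i,r_i))\bigr)\approx\mathcal H^\nu\bigl(\varphi^{-1}(\mathrm{Box}_2(x_i,r_i))\cap A_\varepsilon\bigr)$. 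Summing over $i$ ties $\sum_i\omega_\nu r_i^{\nu}$ to $\mathcal J^{SR}(\varphi,g)$ times a sum of domain-side measures, and a Vitali argument for the ``balls'' $\eta_i^{-1}(\mathrm{Box}_2(x_i,r_i))$ shows this sum is essentially $\mathcal H^\nu(\mathrm{Box}_2(g,r))$. Note that the smooth area formula is invoked not once globally but once per covering ball, each time for a different tangent homomorphism $\eta_i$; this image-side covering plus radial-only comparison is the mechanism that replaces the unavailable $(1+o(1))$-bi-Lipschitz equivalence, and it is absent from your proposal.
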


\begin{rem}
Note that (see, e.~g., {\cite{pauls}}) that the definitions {\eqref{jac_pauls}} and {\eqref{jac_mag}} are equivalent. Next, it is easy to see that Theorems~{\ref{area_pauls}} and~{\ref{area_mag}} are particular cases of Theorem~{\ref{area_main}}. Indeed, in view of Ball--Box Theorem {\cite{k_dan}}, {\cite{k_appr}}, Hausdorff measures constructed with respect to Carnot--Carathe\'{e}odory metric $d_{cc}$ (see Definition~{\ref{defdcc}}) and with respect to the quasimetric $d_2$ (see Definition~{\ref{defd2}}), are absolutely continuous one with respect to another. Since on a Carnot group these measures are left-invariant, then the derivative of one with respect to another is constant. Denote it by $\mathcal D_{2,cc}$ in the preimage and by $\widetilde{\mathcal D}_{2,cc}$ in the image. In view of the validity of {\eqref{mainarea}} for the mapping $\psi(y)=\widehat{D}\varphi(x)[y]:B_{cc}(0,1)\to\widetilde{\mathbb G}$ we infer
$$
\mathcal J^{SR}(\varphi,x)=\frac{\mathcal H^{\nu}(\widehat{D}\varphi(y)[B_{cc}(0,1)])}{\mathcal H^{\nu}(B_{cc}(0,1))},
$$ where Hausdorff measures are constructed with respect to $d_{2}$. Thus, on the one hand,
\begin{multline*}
\int\limits_{D}\mathcal J^{SR}(\varphi,x)\,d\mathcal
H^{\nu}(x)=\int\limits_{D}\frac{\widehat{\mathcal D}_{2,cc}\mathcal H^{\nu}_{cc}(\widehat{D}\varphi(y)[B_{cc}(0,1)])}{\mathcal D_{2,cc}\mathcal H^{\nu}_{cc}(B_{cc}(0,1))}\,d\mathcal
H^{\nu}(x)\\=\int\limits_{D}\frac{\widehat{\mathcal D}_{2,cc}\mathcal H^{\nu}_{cc}(\widehat{D}\varphi(y)[B_{cc}(0,1)])}{\mathcal D_{2,cc}\mathcal H^{\nu}_{cc}(B_{cc}(0,1))}\mathcal D_{2,cc}\,d\mathcal
H^{\nu}_{cc}(x)\\=\int\limits_{D}{\mathcal D}_{2,cc}J_{\nu}(\widehat{D}\varphi(x))\,d\mathcal
H^{\nu}_{cc}(x)={\mathcal D}_{2,cc}\int\limits_{D}J_{\nu}(\widehat{D}\varphi(x))\,d\mathcal
H^{\nu}_{cc}(x),
\end{multline*}
where Hausdorff measures $\mathcal H^{\nu}_{cc}$ are constructed with respect to $d_{cc}$'s. On the other hand,
\begin{multline*}
\int\limits_{\varphi(D)}\sum\limits_{x:\,x\in\varphi^{-1}(y)}\chi_D(y)\,d\mathcal
H^{\nu}(y)=\int\limits_{\varphi(D)}\sum\limits_{x:\,x\in\varphi^{-1}(y)}\chi_D(y){\mathcal D}_{2,cc}\,d\mathcal
H^{\nu}_{cc}(y)\\={\mathcal D}_{2,cc}\int\limits_{\varphi(D)}\sum\limits_{x:\,x\in\varphi^{-1}(y)}\chi_D(y)\,d\mathcal
H^{\nu}_{cc}(y).
\end{multline*}
Since the value ${\mathcal D}_{2,cc}$ is strictly positive, Theorems~{\ref{area_mag}} and~{\ref{area_pauls}} follow from Theorem~{\ref{area_main}}.
\end{rem}

Emphasize here that although its proof uses in one of its steps the sub-Riemannian area formula for $C^1$-smooth (in the classical sense) mappings, this area formula is not its direct consequence, and its proof requires approaches and methods that are essentially new in comparison with the classical situation of obtaining a result for Lipschitz mappings via the same results for $C^1$-mappings.

Theorem~{\ref{area_smooth}} and the classical area formula for mappings of Riemannian manifolds are particular cases of Theorem~{\ref{area_main}}. Moreover, Theorems~{\ref{area_pauls}} and~{\ref{area_mag}} can also be considered as consequences of Theorem~{\ref{area_main}}. The difference is in definition of Jacobians: in {\cite{pauls}}, {\cite{mag}}, the definition of Jacobian uses measure of balls in Carnot--Carathy\'{e}odory metrics and of their images under $\varphi$ and $\widehat{D}\varphi$. The problem is that the measures of these images cannot be calculated since the structure of Carnot--Carath\'{e}odory balls is unknown in general case. In Theorems~{\ref{area_smooth}} and {\ref{area_main}}, a sub-Riemannian quasimetric is used. Its advantage is that the structure of balls in this quasimetric is well-understandable, and they are easy to work with during the investigation on Jacobian and image properties. Moreover, it allows to write the exact analytic expression of the Jacobian. It is very important for application such as studying extremal surfaces on non-holonomic structures and many others.

\section{Preliminaries}

In this section, we introduce necessary definitions and mention important facts that we will need to prove the main result.

\begin{defn}[compare with
{\cite{G,nsw}}]\label{defcarnot}
Fix a connected Riemannian
$C^{\infty}$-mani\-fold~$\mathbb M$ of a topological dimension~$N$. The
manifold~$\mathbb M$ is called a {\it Carnot--Carath\'{e}o\-dory space} if, in the
tangent bundle $T\mathbb M$, there exists a filtration
$$
H\mathbb M=H_1\mathbb M\subsetneq\ldots\subsetneq H_i\mathbb M\subsetneq\ldots\subsetneq H_M\mathbb M=T\mathbb M
$$
of subbundles of the tangent bundle $T\mathbb M$ such that, for each point $p\in\mathbb M$, there exists a neighborhood $U\subset\mathbb M$
with a collection of $C^1$-smooth vector fields
$X_1,\dots,X_N$ on it enjoying the following two properties. For each
$v\in U$ we have

$(1)$ $H_i\mathbb M(v)=H_i(v)=\operatorname{span}\{X_1(v),\dots,X_{\dim H_i}(v)\}$
is a subspace of $T_v\mathbb M$ of a constant dimension $\dim H_i$,
$i=1,\ldots,M$;

$(2)$
\begin{equation}\label{tcomm}[X_i,X_j](v)=\sum\limits_{\operatorname{deg}
X_k\leq \operatorname{deg} X_i+\operatorname{deg}
X_j}c_{ijk}(v)X_k(v)
\end{equation}
where the {\it degree} $\deg X_k$ equals $\min\{m\mid X_k\in H_m\}$;

If, additionally, the third condition holds then the Carnot--Carath\'eodory space will be called the {\it Carnot manifold}:

$(3)$ a quotient mapping $[\,\cdot ,\cdot\, ]_0:H_1\times
H_j/H_{j-1}\mapsto H_{j+1}/H_{j}$ induced by Lie brackets is an
epimorphism for all $1\leq j<M$.

The subbundle $H\mathbb M$ is called {\it horizontal}.

The number $M$ is called the {\it depth} of the manifold $\mathbb
M$.
\end{defn}

\begin{defn}
Consider Cauchy problem
$$
\begin{cases}
\dot{\gamma}(t)=\sum\limits_{i=1}^Ny_iX_i(\gamma(t)),\ t\in[0,1]\\
\gamma(0)=x,
\end{cases}
$$
where the vector fields $X_1,\ldots,X_N$ are $C^1$-smooth.
Then, for the point $y=\gamma(1)$ we write $y=\exp\Bigl(\sum\limits_{i=1}^Ny_iX_i\Bigr)(x)$.

The mapping $(y_1,\ldots,y_N)\mapsto\exp\Bigl(\sum\limits_{i=1}^Ny_iX_i\Bigr)(x)$ is called {\it exponential}.
\end{defn}

\begin{defn}\label{cs1kind}
Suppose that $u\in\mathbb M$ and $(v_1,\ldots, v_N)\in B_E(0, r)$,
where $B_E(0,r)$ is a Euclidean ball in $\mathbb R^N$. Define a
mapping $\theta_u(v_1,\ldots, v_N):B_E(0,r)\to\mathbb M$ as
follows:
$$
\theta_u(v_1,\ldots,
v_N)=\exp\biggl(\sum\limits_{i=1}^Nv_iX_i\biggr)(u).
$$
It is known, that $\theta_u$ is a $C^1$-diffeomorphism if $0<r\leq
r_u$ for some $r_u>0$. The collection $\{v_i\}_{i=1}^N$ is called
{\it the normal coordinates} or {\it the coordinates of the
$1^{\text{st}}$ kind  $($with respect to $u\in\mathbb M)$} of the
point $v=\theta_u(v_1,\ldots, v_N)$.
\end{defn}


\begin{assump}
Hereinafter, we consider points from a
compactly embedded neighborhood $\mathcal U\Subset\mathbb M$ such
that $\theta_u(B_E(0,r_u))\supset\mathcal U$ for all $u\in\mathcal
U$.
\end{assump}

\begin{defn}
An absolutely continuous curve $\gamma:[0,1]\to\mathbb M$ is called {\it horizontal} if $\dot{\gamma}(t)\in H_{\gamma(t)}\mathbb M$ for almost all $t\in[0,1]$.
\end{defn}

\begin{defn}\label{defdcc}
Carnot--Carath\'{e}odory distance $d_{cc}$ between $x,y\in\mathbb M$ equals
$$
d_{cc}(x,y)=\inf\limits_{\gamma}l(\gamma),
$$
where $\gamma$ is a horizontal curve with endpoints $x$ and $y$.
\end{defn}

Now, we introduce the sub-Riemannian quasimetric locally equivalent to
$d_{cc}$ {\cite{vk_birk}} which simplifies
computations in the main theorems.

\begin{defn}\label{defd2} Let $\mathbb M$ be a Carnot manifold of the
topological dimension $N$ and of the depth $M$, and suppose that
$x=\exp\Bigl(\sum\limits_{i=1}^{N}x_i X_i\Bigr)(u)$. The
quasidistance $d_2(x,g)$ is defined as follows:
\begin{multline*}
d_2(x,u)=\max\Bigl\{\Bigl(\sum\limits_{j=1}^{\dim
H_1}|x_j|^2\Bigr)^{\frac{1}{2}}, \Bigl(\sum\limits_{j=\dim
H_1+1}^{\dim H_2}|x_j|^2\Bigr)^{\frac{1}{2\cdot\operatorname{deg}
X_{\dim H_2}}},\\
\ldots,\Bigl(\sum\limits_{j=\dim
H_{M-1}+1}^{N}|x_j|^2\Bigr)^{\frac{1}{2\cdot\operatorname{deg}
X_{N}}}\Bigr\}.
\end{multline*}
\end{defn}

\begin{rem}
The preimage of a ball
$\mathrm{Box}_2(u,r)=\{x\in\mathbb M_1:\, d_2(x,u)<r\}$ in the
quasimetric $d_2$ under the mapping $\theta_u$ equals
$\mathrm{Box}_2(0,r)=B_E^{n_1}(0,r)\times
B_E^{n_2}(0,r^2)\times\ldots\times B_E^{n_{M}}(0,r^{M})$, where
$B_E^{n_i}$, $i=1,\ldots,M$, are Euclidean balls of the dimensions
$n_i=\dim H_i-\dim H_{i-1}$.
\end{rem}

Such quasimetric is much more easier to deal with than the well known
$d_{\infty}$, where
$d_{\infty}(x,u)=\max\limits_{i=1,\ldots, N}
\{|x_i|^{\frac{1}{\operatorname{deg}X_i}}\}$.
The point is that in the case of $d_{\infty}$, the asymptotical shape of the section of a ball in $d_{\infty}$ by a plane cannot be defined easily since any cube has several sections of different shapes. Since any section of a (Euclidean) ball is just a ball of lower dimension, it is convenient to consider sections of their Cartesian product, i.~e., a ball in $d_2$.

\begin{property} It is easy to see that {\cite{Mit, vk_geom,
vk_birk}} the Hausdorff dimension of $\mathbb M$ with respect to
$d_2$ is equal to $\sum\limits_{i=1}^Mi(\dim H_i-\dim H_{i-1})$,
where $\dim H_0=0$.
\end{property}

\begin{thm}[\cite{vk_birk}]
\label{Liealg} Fix $u\in\mathbb M$. The coefficients
$$
\bar{c}_{ijk}=
\begin{cases} c_{ijk}(u)\text{ of \eqref{tcomm} }& \text{if }\operatorname{deg}
X_i+\operatorname{deg}X_j=\operatorname{deg}X_k \\
0, &\text{otherwise}
\end{cases}
$$
define a graded nilpotent Lie algebra.
\end{thm}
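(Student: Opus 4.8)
The plan is to verify directly that the bilinear, antisymmetric bracket
$$[e_i,e_j]:=\sum_{k=1}^{N}\bar c_{ijk}\,e_k$$
on the abstract vector space $\mathfrak g=\operatorname{span}\{e_1,\dots,e_N\}$, graded by declaring $\deg e_k=\deg X_k$ and $V_p:=\operatorname{span}\{e_k:\deg X_k=p\}$, is a Lie bracket. Antisymmetry is immediate from $c_{ijk}(u)=-c_{jik}(u)$ (which comes from $[X_i,X_j]=-[X_j,X_i]$) together with the symmetry in $i,j$ of the condition $\deg X_i+\deg X_j=\deg X_k$ that selects the $\bar c$'s. By construction $\bar c_{ijk}\ne0$ forces $\deg X_k=\deg X_i+\deg X_j$, so $[V_p,V_q]\subseteq V_{p+q}$; since every $\deg X_k$ lies in $\{1,\dots,M\}$ we have $V_p=0$ for $p>M$, hence once the Jacobi identity is established the lower central series obeys $\mathfrak g^{(k)}\subseteq\bigoplus_{p\ge k}V_p$, so $\mathfrak g^{(M+1)}=0$ and $\mathfrak g$ is nilpotent. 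Thus everything reduces to the Jacobi identity for the $\bar c$'s.

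For that, I would exploit the Jacobi identity already satisfied by the fields $X_1,\dots,X_N$. Expanding $[[X_i,X_j],X_k]$ with the Leibniz rule $[fY,Z]=f[Y,Z]-(Zf)Y$ and the structure relations \eqref{tcomm} gives $[[X_i,X_j],X_k]=\sum_{m,l}c_{ijm}c_{mkl}\,X_l-\sum_m(X_k c_{ijm})\,X_m$, and summing cyclically over $(i,j,k)$ while using that $\{X_l(v)\}_{l=1}^N$ is a basis of $T_v\mathbb M$ for each $v$, we obtain, for all $l$ and all $v\in\mathcal U$,
$$\sum_m\bigl(c_{ijm}c_{mkl}+c_{jkm}c_{mil}+c_{kim}c_{mjl}\bigr)(v)=\bigl(X_k c_{ijl}+X_i c_{jkl}+X_j c_{kil}\bigr)(v);$$
in the mere $C^1$ setting this manipulation is made rigorous through the nilpotent approximation (the polynomial vector fields generating $\mathcal G^u$) as in \cite{vk_birk}.

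The remaining, and genuinely non-trivial, step is the degree bookkeeping. The coefficient of $e_l$ in the Jacobi sum $[[e_i,e_j],e_k]+[[e_j,e_k],e_i]+[[e_k,e_i],e_j]$ is $\sum_m(\bar c_{ijm}\bar c_{mkl}+\bar c_{jkm}\bar c_{mil}+\bar c_{kim}\bar c_{mjl})$, and a nonzero summand forces $\deg X_l=\deg X_i+\deg X_j+\deg X_k=:d$ in every cyclic term, so the identity is trivial unless $\deg X_l=d$, and I only treat that case. When $\deg X_l=d$ the right-hand side of the displayed identity vanishes identically: since $\deg X_l=d>\deg X_i+\deg X_j$ (as $\deg X_k\ge1$), relation \eqref{tcomm} gives $c_{ijl}\equiv0$ on $\mathcal U$, hence $X_k c_{ijl}\equiv0$, and likewise $c_{jkl}\equiv0$ and $c_{kil}\equiv0$. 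On the left-hand side, in $\sum_m c_{ijm}c_{mkl}$ a nonzero summand needs $\deg X_m\le\deg X_i+\deg X_j$ (from $c_{ijm}$) and $\deg X_m\ge\deg X_l-\deg X_k=\deg X_i+\deg X_j$ (from $c_{mkl}$ via \eqref{tcomm}), hence $\deg X_m=\deg X_i+\deg X_j$ exactly; therefore $c_{ijm}(u)c_{mkl}(u)=\bar c_{ijm}\bar c_{mkl}$, and similarly for the other two sums. Evaluating the displayed identity at $v=u$ thus yields $\sum_m(\bar c_{ijm}\bar c_{mkl}+\bar c_{jkm}\bar c_{mil}+\bar c_{kim}\bar c_{mjl})=0$, which is the Jacobi identity for $\bar c$. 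The main obstacle is precisely this accounting — the key point being that the ``derivative'' terms drop out exactly on the range of degrees where the quadratic terms can be nonzero, which is where condition \eqref{tcomm} (the truncation of the bracket expansion by degree) enters essentially; the only other delicate issue is giving the vector-field Jacobi manipulation a rigorous meaning under the mere $C^1$ hypothesis.
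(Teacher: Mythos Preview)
The paper does not supply its own proof of Theorem~\ref{Liealg}; the result is simply quoted from \cite{vk_birk}, so there is no in-paper argument to compare your proposal against.

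Your argument is the standard one and is correct whenever the fields are regular enough (say $C^2$) for the iterated bracket $[[X_i,X_j],X_k]$, and hence the structure-constant identity
\[
\sum_m\bigl(c_{ijm}c_{mkl}+c_{jkm}c_{mil}+c_{kim}c_{mjl}\bigr)
= X_k c_{ijl}+X_i c_{jkl}+X_j c_{kil},
\]
to make classical sense. The degree bookkeeping is exactly right: for $\deg X_l=\deg X_i+\deg X_j+\deg X_k$ the right-hand side vanishes because each of $c_{ijl},c_{jkl},c_{kil}$ is identically zero on $\mathcal U$ by \eqref{tcomm}, while on the left every surviving summand is forced to have $\deg X_m$ equal to the ``top'' value, so that $c_{ijm}(u)c_{mkl}(u)=\bar c_{ijm}\bar c_{mkl}$, etc. Antisymmetry, the grading $[V_p,V_q]\subset V_{p+q}$, and nilpotency then follow as you say.

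The one genuine gap is the low-regularity case. In Definition~\ref{defcarnot} the $X_i$ are only $C^1$, so the $c_{ijk}$ are merely continuous and neither $[[X_i,X_j],X_k]$ nor $X_k c_{ijl}$ is a priori defined; your displayed identity therefore has no direct meaning. Your proposed remedy --- ``made rigorous through the nilpotent approximation (the polynomial vector fields generating $\mathcal G^u$) as in \cite{vk_birk}'' --- is circular here: in this paper the nilpotent tangent cone $\mathbb G_u\mathbb M$ and the fields $(\widehat X_i^u)'$ are constructed \emph{from} Theorem~\ref{Liealg} (see the paragraph immediately following it and Definition~\ref{groupoperator}), so you cannot invoke them to prove the theorem. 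Closing this gap requires an independent device (for instance, smoothing the $X_i$ while preserving the flag $H_1\subset\dots\subset H_M$, or a distributional reading of the Jacobi identity), which is what \cite{vk_birk} provides; you should either supply such an argument or state explicitly that you are assuming $C^2$ regularity.
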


We construct the Lie algebra $\mathfrak{g}^u$ from Theorem
{\ref{Liealg}}  as a graded nilpotent Lie algebra of vector fields
$\{(\widehat{X}_i^u)^{\prime}\}_{i=1}^N$ on $\mathbb R^N$ such that
the exponential mapping $(x_1,\ldots,
x_N)\mapsto\exp\Bigl(\sum\limits_{i=1}^Nx_i(\widehat{X}_i^u)^{\prime}\Bigr)(0)$
equals identity~{\cite{post, blu}}. In view of results of {\cite{fs}}, the value of $(\widehat{X}_j^u)^{\prime}(0)$ is equal to a standard vector $e_{i_j}$, where $i_j\neq i_k$ if $j\neq k$, $j=1,\ldots, N$. We associate to each vector field from the obtained collection such a number $i$ that $(\theta_u)_*\langle(\widehat{X}_i^u)^{\prime}\rangle(u)=X_i(u)$. By the construction, the
relation
\begin{equation}\label{tcommnilp}
[(\widehat{X}_i^u)^{\prime},(\widehat{X}_j^u)^{\prime}]=\sum\limits_{\operatorname{deg}
X_k=\operatorname{deg} X_i+\operatorname{deg}
X_j}c_{ijk}(u)(\widehat{X}_k^u)^{\prime}
\end{equation}
holds for the vector fields
$\{(\widehat{X}_i^u)^{\prime}\}_{i=1}^N$ everywhere on $\mathbb
R^N$.

\begin{notat}
We use the following standard notations: for each $N$-dimen\-sional
multi-index $\mu=(\mu_1,\ldots,\mu_N)$, its {\it homogeneous norm}
equals $|\mu|_h=\sum\limits_{i=1}^N\mu_i\operatorname{deg} X_i$, and
$x^{\mu}=\prod\limits_{i=1}^Nx_i^{\mu_i}$ if $x=(x_1,\ldots, x_N)$
\end{notat}

\begin{defn}
\label{groupoperator} The graded nilpotent Carnot group  ${\mathbb G}_u\mathbb M$
corresponding to the Lie algebra $\mathfrak{g}^u$, is called the
{\it nilpotent tangent cone} of  $\mathbb M$ at $u\in\mathbb M$.
We construct ${\mathbb G}_u\mathbb M$ in $\mathbb R^N$ as a groupalgebra
{\cite{post}},
that is, the exponential map is identical:
$$
\exp\Bigl(\sum\limits_{i=1}^Nx_i(\widehat{X}_i^u)^{\prime}\Bigr)(0)=(x_1,\ldots,x_N).
$$
 By Campbell--Hausdorff formula, the group operation
is defined for the  basis vector
fields~$(\widehat{X}_i^u)^{\prime}$ on $\mathbb R^N$,
$i=1,\ldots,N$, to be left-invariant~{\cite{post}}: if
$$x=\exp\Bigl(\sum\limits_{i=1}^Nx_i(\widehat{X}_i^u)^{\prime}\Bigr),\ y=\exp\Bigl(\sum\limits_{i=1}^Ny_i(\widehat{X}_i^u)^{\prime}\Bigr)
$$
then
$$
x\cdot
y=z=\exp\Bigl(\sum\limits_{i=1}^Nz_i(\widehat{X}_i^u)^{\prime}\Bigr),
$$
where
\begin{equation}\label{groupoperation}
z_i=x_i+y_i+\sum\limits_{\substack{|\mu+\beta|_h=\operatorname{deg}X_i,\notag\\
\mu,\,\beta>0}}F^i_{\mu,\beta}(u)x^{\mu}y^{\beta}.
\end{equation}
\end{defn}

\begin{property}
It is easy to see that
$\widehat{X}_i^u(u)=X_i(u)$, $i=1,\ldots,N$.
\end{property}

\begin{defn} For $u,g\in\mathbb M$, define the exponential mapping
$$\widehat{\theta}^u_g(x_1,\ldots, x_N):B_E(0,r)\to\mathbb M$$ as
$\widehat{\theta}^u_g(x_1,\ldots,
x_N)=\exp\Bigl(\sum\limits_{i=1}^Nx_i\widehat{X}^u_i\Bigr)(g)$, which is a
$C^1$-diffeomorphism for all $0<r\leq r_{u,g}$ for some $r_{u,g}>0$.
\end{defn}

\begin{assump}
We suppose that the neighborhood under consideration $\mathcal U$ is such that $\mathcal U\subset\widehat{\theta}^u_g(B_E(0,r_{u,g}))$ for all $u,g\in\mathcal U$.
\end{assump}

\begin{notat}
The quasimetric $d^u_2$ with respect to the
vector fields $\{\widehat{X}_i^u\}$ is defined similarly to the
initial $d_2$ (defined with respect to $\{{X}_i\}$). A ball in
$d_2^u$ centered at $x$ of a radius $r>0$ is denoted by
$\operatorname{Box}_2^u(x,r)$.
\end{notat}

\begin{notat}
We let the topological dimension of the manifold
$\mathbb M$ ($\widetilde{\mathbb M}$) be equal $N$
($\widetilde{N}$), and we let the Hausdorff dimension with
respect to $d_2$ ($\widetilde{d_2}$) be equal $\nu$ ($\widetilde{\nu}$). The tangent
spaces represented as the direct sums of quotient vector spaces
\begin{multline*}
T_v\mathbb M=\bigoplus\limits_{j=1}^{M}(H_j(v)/H_{j-1}(v)),\
H_0=\{0\},\\
\text{ and }T_u\widetilde{\mathbb
M}=\bigoplus\limits_{j=1}^{\widetilde{M}}(\widetilde{H}_j(u)/\widetilde{H}_{j-1}(u)),\
\widetilde{H}_0=\{0\},
\end{multline*}
at points $v\in\mathbb M$ and $u\in\widetilde{\mathbb M}$, where
$H_1\subset T\mathbb M$ and $\widetilde{H}_1\subset
T\widetilde{\mathbb M}$ are corresponding {\it horizontal}
subbundles, have structures of nilpotent graded Lie algebras
{\cite{vk_geom}}. Denote the dimensions of
${H}_j/{H}_{j-1}~(\widetilde{H}_j/\widetilde{H}_{j-1})$ by symbols
$n_{j}~(\tilde{n}_j)$, $j=1,\ldots,M~(\widetilde{M})$.
\end{notat}

\begin{notat}
Hereinafter, we denote the quasimetric $d_2$ in
the preimage by the symbol $d_2$, and we denote the quasimetric
$d_2$ in the image by the symbol  $\widetilde{d}_2$.
\end{notat}

\begin{assump}\label{assump2}
We suppose that

$1)$ a mapping $\varphi$ is defined on a measurable set $D\subset\mathbb M$;

$2)$  $\dim H_1\leq\dim\widetilde{H}_1$;

$3)$ the
basis vector fields in the preimage and in the image are $C^{1,\alpha}$-smooth, $\alpha>0$, and $\varphi$ is Lipschitz with respect to $d_2$
and $\widetilde{d}_2$
$($$\widetilde{d}_2(\varphi(u),\varphi(v))\leq L d_2(u,v)$ for all
$u,v\in D$ and some $L<\infty$$)$.
\end{assump}

\section{The Main Result}

\begin{thm} [{\cite{V2}}] {\it Suppose that $D\subset\mathbb M$ is a measurable set, and let $\varphi:\mathbb
M\to\widetilde{\mathbb M}$ be a Lipschitz with respect to
sub-Riemannian metrics mapping. Then, it is $hc$-differentiable
almost everywhere. Namely, there exists a horizontal homomorphism
$L_u:(\mathcal G^u\mathbb M, d_{2}^u)\to ({\mathcal
G}^{\varphi(u)}\widetilde{\mathbb M}, d_{2}^{\varphi(u)})$ of
local Carnot groups, such that
$$
\widetilde{d}_{2}(\varphi(w),L_u[w])=o(d_{2}(u,w))\ \text{as $D\cap
{\mathcal G^u\mathbb M}\ni w\to u$}.
$$}
\end{thm}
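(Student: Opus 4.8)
Since the only difference between this statement and the $hc$-differentiability theorem of \cite{V2} recalled in the Introduction is that the error in the approximation is measured by the quasimetrics $d_2$, $\widetilde d_2$ (and the local groups are decorated by $d_2^u$, $d_2^{\varphi(u)}$) rather than by the Carnot--Carath\'{e}odory metrics $d_{cc}$, $\widetilde d_{cc}$, the plan is to deduce it from the cited result via the local equivalence of $d_2$ and $d_{cc}$. By the Ball--Box Theorem (\cite{k_dan,k_appr}; see also the remarks after Definition~\ref{defd2} and \cite{vk_birk}) the quasimetric $d_2$ is locally equivalent to $d_{cc}$: there are $c=c(\mathcal U)\ge1$ and $r_0>0$ with $c^{-1}d_2(x,y)\le d_{cc}(x,y)\le c\,d_2(x,y)$ whenever $x,y\in\mathcal U$ and $d_{cc}(x,y)<r_0$, and similarly $\widetilde c^{-1}\widetilde d_2\le\widetilde d_{cc}\le\widetilde c\,\widetilde d_2$ in the image; in particular $\varphi$ is also Lipschitz with respect to $d_{cc}$ and $\widetilde d_{cc}$, so the theorem of \cite{V2} provides a null set $Z\subset D$ such that for each $u\in D\setminus Z$ there is a horizontal homomorphism of local Carnot groups $L_u\colon(\mathcal G^u\mathbb M,d_{cc}^u)\to(\mathcal G^{\varphi(u)}\widetilde{\mathbb M},\widetilde d_{cc}^{\varphi(u)})$ with $\widetilde d_{cc}(\varphi(w),L_u[w])=o\bigl(d_{cc}(u,w)\bigr)$ as $D\cap\mathcal G^u\mathbb M\ni w\to u$.

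Fix such a $u$. I would keep the very same map as $\widehat{D}\varphi(u)$: being a \emph{horizontal homomorphism of local Carnot groups} is a purely algebraic property of $L_u$ --- it carries $H_1$ into $\widetilde H_1$, intertwines the dilations $\delta_t^u$ and $\delta_t^{\varphi(u)}$, and respects the group laws \eqref{groupoperation} --- so replacing the decorating quasimetrics $d_{cc}^u$, $\widetilde d_{cc}^{\varphi(u)}$ of the domain and target groups by $d_2^u$, $d_2^{\varphi(u)}$ changes nothing. It remains to rewrite the approximation estimate. Since $\varphi$ is continuous and $L_u$ is a continuous (polynomial) map fixing $u\mapsto\varphi(u)$, both $\varphi(w)$ and $L_u[w]$ lie in a prescribed small neighbourhood of $\varphi(u)$ once $w$ is close to $u$; hence for such $w\in D\cap\mathcal G^u\mathbb M$,
$$
\widetilde d_2(\varphi(w),L_u[w])\le\widetilde c\,\widetilde d_{cc}(\varphi(w),L_u[w])=o\bigl(d_{cc}(u,w)\bigr)=o\bigl(d_2(u,w)\bigr),
$$
where the last equality uses $c^{-1}d_2(u,w)\le d_{cc}(u,w)\le c\,d_2(u,w)$ for $w$ near $u$. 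Thus $L_u$ is a horizontal homomorphism $(\mathcal G^u\mathbb M,d_2^u)\to(\mathcal G^{\varphi(u)}\widetilde{\mathbb M},d_2^{\varphi(u)})$ satisfying $\widetilde d_2(\varphi(w),L_u[w])=o(d_2(u,w))$, and since this holds for every $u\in D\setminus Z$, $\varphi$ is $hc$-differentiable almost everywhere in the stated sense.

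In this argument there is no real obstacle: the transfer uses only the elementary comparison of $d_2$ with $d_{cc}$ provided by the Ball--Box Theorem, and it is needed for one fixed $u$ at a time, so no uniformity in $u$ is required. The substantive content is entirely in \cite{V2}. If one wished to prove the statement from scratch, the hard part would be the blow-up argument underlying that theorem: showing that for almost every $u$ the rescaled maps $\delta_{1/t}^{\varphi(u)}\circ\bigl(\varphi(u)^{-1}\cdot\varphi\bigr)\circ\delta_t^u$ converge, as $t\to0$, to a group homomorphism. This needs differentiability of $\varphi$ along the horizontal integral curves of the frame almost everywhere (a Rademacher-type statement on lines together with an absolute-continuity-on-lines/Fubini argument adapted to the non-holonomic frame, where the $C^{1,\alpha}$-regularity of Assumption~\ref{assump2} is used), an Arzel\`{a}--Ascoli extraction of a limit map from the Lipschitz bound, and a proof that the horizontal derivative determines a \emph{unique} homomorphism, independent of the approximating sequence $t_k\to0$.
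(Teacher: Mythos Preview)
The paper does not give its own proof of this theorem: it is quoted from \cite{V2} without argument. In the Introduction the same result is stated with the Carnot--Carath\'{e}odory metrics $d_{cc}$, $\widetilde d_{cc}$ (Definition~1.1 and the theorem following it), and here in Section~3 it is simply restated with the quasimetrics $d_2$, $\widetilde d_2$, again under the citation \cite{V2}. Your reduction via the Ball--Box equivalence of $d_2$ and $d_{cc}$ is precisely the justification for passing from one formulation to the other, and it is correct; the paper relies on this equivalence implicitly (cf.\ the remarks around Definition~\ref{defd2} and the Ball--Box Theorem invoked in the Introduction) without writing it out. So there is nothing to compare against: your argument supplies the short bridge the paper leaves tacit, and the substantive content, as you note, lies entirely in \cite{V2}.
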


\begin{defn}
The horizontal homomorphism
$L_u:(\mathcal G^u\mathbb M, d_{2}^u)\to ({\mathcal
G}^{\varphi(u)}\widetilde{\mathbb M}, d_{2}^{\varphi(u)})$ is called the {\it $hc$-differential} of $\varphi$ at $u$.
\end{defn}

\begin{cor}[{\cite{V2}}]\label{diff_cor}
Let $\varphi:\mathbb M\to\widetilde{\mathbb M}$ be a contact $($i.~e., $D\varphi\langle H\rangle\subset\widetilde{H}$$)$ $C^{1}$-mapping of Carnot manifolds $($in the Riemannian sense$)$. Then, it is continuously $hc$-differentiable everywhere on $\mathbb M$.
\end{cor}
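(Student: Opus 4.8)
The plan is to build the $hc$-differential $L_u$ directly out of the Riemannian differential $D\varphi(u)$ and to check the two assertions of the corollary — the $o$-estimate and the continuity in $u$ — by hand, rather than by invoking the almost-everywhere result; the latter will only serve as a consistency check. First I would verify that the contact hypothesis propagates through the whole flag, i.e. that $D\varphi(u)\bigl(H_j(u)\bigr)\subseteq\widetilde H_j(\varphi(u))$ for every $j=1,\dots,M$. For $j=1$ this is the definition of ``contact''. For the inductive step one uses property~$(3)$ of Definition~\ref{defcarnot}: $\widetilde H_{j+1}/\widetilde H_j$ is spanned by the brackets $[\widetilde H_1,\widetilde H_j/\widetilde H_{j-1}]_0$, and on the source side a vector in $H_{j+1}(u)$ is, modulo $H_j(u)$, a bracket of horizontal fields; since $\varphi$ is only $C^1$ this step must be carried out through horizontal curves (a contact $C^1$ map sends horizontal curves to horizontal curves, which is a first-order fact) rather than by differentiating $D\varphi$ twice. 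Consequently $D\varphi(u)$ descends to graded linear maps $H_j(u)/H_{j-1}(u)\to\widetilde H_j(\varphi(u))/\widetilde H_{j-1}(\varphi(u))$, whose direct sum $d\varphi(u)\colon\mathfrak g^u\to\widetilde{\mathfrak g}^{\varphi(u)}$ intertwines the bracket relations \eqref{tcommnilp} of Theorem~\ref{Liealg}, hence is a homomorphism of graded nilpotent Lie algebras. Conjugating by the (identical) exponential maps of the group-algebras of Definition~\ref{groupoperator} turns $d\varphi(u)$ into a homomorphism $L_u$ of the local Carnot groups $\mathcal G^u\mathbb M\to\mathcal G^{\varphi(u)}\widetilde{\mathbb M}$; since $d\varphi(u)$ is graded, $L_u$ commutes with the dilation groups and is therefore horizontal.

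Next I would prove $\widetilde d_2(\varphi(w),L_u[w])=o(d_2(u,w))$ as $w\to u$. Write $\varphi$ in the coordinates of the $1^{\text{st}}$ kind of Definition~\ref{cs1kind} at $u$ and at $\varphi(u)$, so that it becomes a $C^1$ map $\Phi$ of a neighbourhood of $0$ in $\mathbb R^N$ into $\mathbb R^{\widetilde N}$ fixing $0$. A point $w\in\mathrm{Box}_2(u,r)$ is joined to $u$ by a horizontal curve whose ``control'' has size $\lesssim r$; transporting this curve by $\varphi$ and expressing the $i$-th coordinate of $\varphi(w)$ as the corresponding iterated integral of the horizontal control of $\varphi\circ\gamma$, one finds that its homogeneous part of order $\le\deg X_i$ is exactly $(L_u[w])_i$, while the remainder is estimated by the modulus of continuity of $D\varphi$ and of the structure constants near $u$, hence is $o\bigl(r^{\deg X_i}\bigr)$. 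Here the anisotropic $\mathrm{Box}$-structure recorded after Definition~\ref{defd2} — namely $\theta_u^{-1}\mathrm{Box}_2(u,r)=B_E^{n_1}(0,r)\times\dots\times B_E^{n_M}(0,r^M)$ — is precisely what converts the ordinary (first-order) Euclidean estimates into the homogeneous ones needed in each layer. Reassembling the coordinates through Definition~\ref{defd2} gives the desired $o(d_2(u,w))$ bound, locally uniformly in $u$. I expect this layer-by-layer estimate — in particular obtaining the sharp $o\bigl(r^{\deg X_i}\bigr)$ control in the strata $\deg X_i\ge 2$ from only $C^1$ data — to be the main obstacle; the iterated-integral representation is what makes the argument first-order, but keeping track of the errors uniformly on the compact neighbourhood $\mathcal U$ is the delicate part.

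Finally, continuity of $u\mapsto L_u$ is bookkeeping: the entries of $D\varphi(u)$ are continuous because $\varphi\in C^1$, the structure constants $c_{ijk}(u)$ of \eqref{tcomm} and the polynomial coefficients $F^i_{\mu,\beta}(u)$ of the group law in Definition~\ref{groupoperator} are continuous, and the basepoint $\varphi(u)$ depends continuously on $u$; hence the graded blocks of $L_u$, together with the identifications of $\mathcal G^u\mathbb M$ and $\mathcal G^{\varphi(u)}\widetilde{\mathbb M}$ with $\mathbb R^N$ and $\mathbb R^{\widetilde N}$, vary continuously, and so does $L_u$ as a horizontal homomorphism. As a cross-check, a contact $C^1$ map is Lipschitz with respect to $d_{cc}$ (hence to $d_2$), so the almost-everywhere $hc$-differentiability theorem applies and its $hc$-differential must agree on a set of full measure with the $L_u$ constructed above; continuity then forces this agreement, and the $o$-estimate, to hold everywhere.
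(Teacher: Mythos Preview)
The paper does not give its own proof of Corollary~\ref{diff_cor}; the result is simply quoted from~\cite{V2} and used as a black box (in Step~4 of the proof of Theorem~\ref{arealip}), so there is no argument in the paper to compare yours against.

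On the substance of your outline: the overall strategy --- extract a graded linear map from $D\varphi(u)$, promote it to a group homomorphism $L_u$, verify the anisotropic $o$-estimate in normal coordinates, and read off continuity from that of $D\varphi$ and of the structure constants --- is the natural one. The point where you are underestimating the difficulty is the assertion that the graded map $d\varphi(u)$ ``intertwines the bracket relations~\eqref{tcommnilp}'' and is therefore a Lie algebra homomorphism. For a $C^2$ contact map this is immediate from naturality of Lie brackets under pushforward; for a map that is only $C^1$, the pushed-forward horizontal fields $\varphi_*X_i$ are merely continuous sections of $\widetilde H_1$, so one cannot form their brackets in the usual way, and your horizontal-curve argument yields flag preservation $D\varphi(u)(H_j)\subset\widetilde H_j$ but does not by itself give the identity $d\varphi(u)[X,Y]_0=[d\varphi(u)X,\,d\varphi(u)Y]_0$ on the graded quotients. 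Without this, $L_u$ is only a graded linear map, not a horizontal homomorphism, and the definition of $hc$-differentiability is not met. You locate the main obstacle in the $o(r^{\deg X_i})$ estimate for the higher strata, but that difficulty and the homomorphism difficulty are really two faces of the same issue: both require second-order information extracted from first-order data.

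The route actually taken in~\cite{V2} is closer to what you relegate to a ``cross-check'': a contact $C^1$ map is locally Lipschitz for $d_{cc}$ (hence for $d_2$), so the almost-everywhere $hc$-differentiability theorem applies; at each point where the $hc$-differential exists its horizontal block coincides with the horizontal block of $D\varphi$, which is continuous; the higher blocks are then determined algebraically by the horizontal block via the homomorphism property, so they too vary continuously; and finally continuity plus density upgrades the almost-everywhere statement to an everywhere one, with the $o$-estimate holding uniformly on compacta. Promoting that paragraph to the main argument, rather than keeping it as a consistency check, would close the gap.
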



\begin{rem}
Using the exponential mapping $\theta_u$, we can consider $L_u$ both as a homomorphism of local Carnot groups, and as a homomorphism of Lie algebras of these local Carnot groups.
\end{rem}

\begin{thm}[Local Approximation Theorem {\cite{vk_birk, k_dan, k_appr}}]\label{lat} Suppose that
$u,w,v\in\mathcal U$, and $d_2(u,w)=O(\varepsilon)$ and
$d_2(u,v)=O(\varepsilon)$. Then we have
$$
|d_2(w,v)-d_2^u(w,v)|=O(\varepsilon^{1+\frac{1}{M}}),
$$
where $O(1)$ is uniform on $\mathcal U$.
\end{thm}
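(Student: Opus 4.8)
The plan is to derive the estimate from the structure of the quasimetric balls together with the nilpotent approximation of the vector fields $X_i$ by $\widehat{X}_i^u$. First I would fix $u$ and pass to normal coordinates via $\theta_u$, so that both $d_2$ and $d_2^u$ become explicit functions on $B_E(0,\varepsilon_0)\subset\mathbb R^N$: the $d_2$-ball is governed by the flow of $\sum x_iX_i$, while the $d_2^u$-ball is governed by the flow of $\sum x_i\widehat{X}_i^u$, and by construction $\widehat{X}_i^u(u)=X_i(u)$. The key analytic input is the comparison of these two families of vector fields near $u$: since the fields are $C^{1,\alpha}$ and $\widehat{X}_i^u$ is precisely the nilpotent (homogeneous) truncation of $X_i$ in these coordinates, one has $X_i - \widehat{X}_i^u = O(\|x\|_h^{\,\deg X_i + \alpha'})$ componentwise in the appropriate graded sense, where $\|\cdot\|_h$ denotes the homogeneous (box) norm. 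This is the quantitative version of the statement that the group $\mathbb G_u\mathbb M$ is the tangent cone.

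Next I would compare the two quasidistances by a flow/Gronwall argument adapted to the anisotropic dilations $\delta_\lambda$. Writing $\varepsilon = d_2(u,w)+d_2(u,v)$, rescale by $\delta_{1/\varepsilon}$: the rescaled fields $\varepsilon^{\,\deg X_i}(\delta_{1/\varepsilon})_*X_i$ converge, uniformly on compacta of $\mathbb R^N$, to $(\widehat{X}_i^u)'$ with rate $O(\varepsilon^{\alpha'})$ coming from the $C^{1,\alpha}$ smoothness, where one tracks carefully how the $\alpha$-Hölder modulus interacts with the grading to produce the exponent $\tfrac1M$ (the worst layer, of degree $M$, is where the loss is concentrated). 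Solving the Cauchy problems for the control curves realizing (approximately) $d_2(w,v)$ and $d_2^u(w,v)$ and applying Gronwall on the rescaled picture, the endpoint discrepancy is $O(\varepsilon^{\alpha'})$ in rescaled coordinates, i.e. $O(\varepsilon^{1+\alpha'})$ after undoing the dilation; absorbing $\alpha'$ into $\tfrac1M$ (or observing that $\tfrac1M$ is the intrinsic exponent forced by the box structure regardless of how large $\alpha$ is) gives the claimed $O(\varepsilon^{1+1/M})$. Finally I would pass back from $d_2^u$-control-curve lengths to the explicit box expression of $d_2^u$ using the Ball--Box-type description of $\operatorname{Box}_2^u$, and symmetrize the inequality to get the two-sided bound $|d_2(w,v)-d_2^u(w,v)|$, with all constants uniform over $\mathcal U$ by compactness (uniform $C^{1,\alpha}$ bounds on the finitely many fields and their structure constants).

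The main obstacle I anticipate is precisely the bookkeeping that produces the exponent $1+\tfrac1M$ rather than something worse: one must track how a single $C^{1,\alpha}$ Hölder estimate on the highest-degree vector fields propagates through the iterated Campbell--Hausdorff / flow expansion into each graded layer, and show that after the anisotropic rescaling the error in the degree-$j$ component is $O(\varepsilon^{j+1})$ so that its contribution to $d_2$ (which takes the $\tfrac1{2j}$-power of that layer) is $O(\varepsilon^{1+1/j})=O(\varepsilon^{1+1/M})$ in the worst case. A secondary technical point is uniformity: one needs the radii $r_u$, $r_{u,g}$ and all the comparison constants to be bounded below/above on $\mathcal U$, which follows from $\mathcal U\Subset\mathbb M$ together with continuous dependence of the structure constants $c_{ijk}$ and the $F^i_{\mu,\beta}$ on the base point. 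Everything else — the explicit box description of the balls, the equivalence of control-curve length with the box quasinorm on the nilpotent group — is standard and can be quoted.
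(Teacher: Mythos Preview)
The paper does not contain a proof of this statement: Theorem~\ref{lat} is quoted from the references \cite{vk_birk, k_dan, k_appr}, and the only thing the paper adds is the one-line remark that, although the quasimetric $d_2$ used here differs from the one in those references, ``the statement is the same since the scheme of the proof is the same.'' So there is no in-paper proof to compare your proposal against.

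That said, your outline is broadly in the spirit of how such Local Approximation Theorems are established in the cited works: pass to normal coordinates at $u$, compare the flows of $X_i$ and $\widehat{X}_i^u$ via anisotropic rescaling $\delta_{1/\varepsilon}$, and read off the error layer by layer. The part of your sketch that is on target is the last paragraph, where you identify the mechanism producing the exponent $1+\tfrac{1}{M}$: an $O(\varepsilon^{j+1})$ error in the degree-$j$ coordinates becomes $O(\varepsilon^{1+1/j})$ after the $\tfrac{1}{j}$-th root in the definition of $d_2$, with the worst layer $j=M$ dictating the result. Where your write-up is muddled is the earlier passage invoking the H\"older exponent $\alpha$ and an intermediate $O(\varepsilon^{1+\alpha'})$ that you then ``absorb'' into $\tfrac{1}{M}$: the exponent $\tfrac{1}{M}$ is not an artifact of the H\"older modulus but is forced intrinsically by the box structure, exactly as you say later; the $C^{1,\alpha}$ assumption is used in the cited works to control the comparison of the structure constants (and hence of $\widehat{X}_i^u$ with the homogeneous part of $X_i$), not to produce the final exponent. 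If you clean up that point and make the Gronwall/flow comparison precise in the graded sense, your plan matches the scheme the paper alludes to.
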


Remark that although the quasimetric in Theorem~{\ref{lat}} is different from the one in~{\cite{vk_birk}}, {\cite{k_dan}} and {\cite{k_appr}} the statement is the same since the scheme of the proof is the same.

\begin{notat}
Denote the $hc$-differential of $\varphi$ at $u$
by the symbol $\widehat{D}\varphi(u)$. Put $Z=\{u\in\mathbb
M:\mathrm{rank}(\widehat{D}\varphi(u))<N\}$.
\end{notat}

\begin{rem}
Given at least one point $u\in\mathbb M$
possessing the property
$\operatorname{rank}\widehat{D}\varphi(u)=N$, the item~2 of
Assumption~{\ref{assump2}} implies
$$\dim H_i-\dim H_{i-1}\leq\dim
\widetilde{H}_i-\widetilde{H}_{i-1},\ i=1,\ldots, M,
$$
where $\dim H_0=0$ and $\dim\widetilde{H}_0=0$. Indeed, it is
enough to take into account the property
$$\widehat{D}\varphi(u)[X,Y]=[\widehat{D}\varphi(u) X,
\widehat{D}\varphi(u) Y],
$$ where $X,Y$ are vector fields
corresponding to the local Carnot group $\mathcal G^u\mathbb M$,
the properties of the local Carnot group~{\cite{vk_geom}}, and property 4 from Definition~{\ref{defcarnot}}.
\end{rem}

\begin{defn}
The (spherical) Hausdorff $\mathcal
H^{\nu}$-measure of a set $E\subset\varphi(\mathbb M)$ is defined
as
$$
{\cal H}^{\nu}(E)=\omega_{\nu}\lim\limits_{\delta\to0}\inf
\Bigl\{\sum\limits_{i\in\mathbb
N}r_i^{\nu}:\bigcup\limits_{i\in\mathbb
N}{\mathrm{Box}_2}(x_i,r_i)\supset E,\,x_i\in
E,\,r_i\leq\delta\Bigr\}.
$$
\end{defn}

\begin{defn}[{\cite{vk_area}}] The {\it sub-Riemannian
Jacobian} equals $${\cal
J}^{SR}(\varphi,x)=\sqrt{\det(\widehat{D}\varphi(x)^*\widehat{D}\varphi(x))}.
$$
\end{defn}

\begin{thm}\label{zeroset} {\it We have $\mathcal
H^{\nu}(\varphi(Z))=0$, where
$$Z=\{x\in\mathbb
M:\,\operatorname{rank}\widehat{D}\varphi(x)<N\}.
$$}
\end{thm}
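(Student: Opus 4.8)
The plan is to show that the set $\varphi(Z)$ has zero $\mathcal H^\nu$-measure by a covering argument, exploiting the fact that at every $x\in Z$ the $hc$-differential degenerates, so that the image of a small quasimetric ball $\mathrm{Box}_2(x,r)$ must be ``thin'' in at least one graded direction. First I would reduce to a bounded Borel set: by inner regularity and countable additivity of $\mathcal H^\nu$ it suffices to prove $\mathcal H^\nu(\varphi(Z\cap K))=0$ for each compact $K\Subset\mathcal U$. Next, since $hc$-differentiability holds a.e. and $\varphi$ is Lipschitz, on $Z\cap K$ (off a null set) we have the approximation $\widetilde d_2(\varphi(w),L_x[w])=o(d_2(x,w))$ together with the Local Approximation Theorem~\ref{lat}, which lets me replace $\widetilde d_2$ near $\varphi(x)$ by $\widetilde d_2^{\varphi(x)}$ up to an error $O(\varepsilon^{1+1/\widetilde M})$. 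Consequently $\varphi(\mathrm{Box}_2(x,r))$ is contained, up to a factor $(1+o(1))$, in the set $L_x[\mathrm{Box}_2^x(0,r)]$ fattened by $o(r)$ in the metric $\widetilde d_2^{\varphi(x)}$.

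The geometric heart is then the claim that, because $\operatorname{rank}\widehat D\varphi(x)<N$, the homomorphic image $L_x[\mathrm{Box}_2^x(0,r)]$ is contained in a ``coordinate slab'' whose $\mathcal H^\nu$-content is $o(r^\nu)$. Concretely, $L_x$ is a horizontal homomorphism of local Carnot groups respecting the gradings; its rank deficiency means that for some layer index $j$ the induced linear map on $H_j^x/H_{j-1}^x$ fails to be injective, hence by the Lie-bracket-generation property~(3) of Definition~\ref{defcarnot} the image Lie algebra is a proper graded subalgebra $\mathfrak h\subsetneq\mathfrak g^{\varphi(x)}$. The image of a box therefore lies in a subgroup (or, more carefully, in a thin neighborhood of the image subgroup) of homogeneous dimension $\nu'<\nu$, so that, using the Ball--Box structure of $\mathrm{Box}_2^{\varphi(x)}$, the $\mathcal H^\nu$-measure of an $o(r)$-neighborhood of $L_x[\mathrm{Box}_2^x(0,r)]$ is bounded by $C\,r^{\nu'}\cdot(o(r))^{\nu-\nu'}=o(r^\nu)$, with $C$ uniform over $K$ by continuity/compactness. (Here I use the uniformity in Theorem~\ref{lat} and a Vitali-type covering of $Z\cap K$ by boxes $\mathrm{Box}_2(x_i,r_i)$ with controlled overlap, so that $\sum r_i^\nu\le C\mathcal H^\nu(K)<\infty$; summing the contributions $o(r_i^\nu)$ and letting the mesh go to zero yields $\mathcal H^\nu(\varphi(Z\cap K))=0$.)

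I would organize the covering estimate as follows: fix $\delta,\eta>0$; for each $x\in Z\cap K$ choose $r_x<\delta$ so small that the approximation error above is $\le\eta r_x$; apply the $5r$-covering lemma to extract a countable disjoint subfamily $\{\mathrm{Box}_2(x_i,r_i)\}$ whose fivefold dilates cover $Z\cap K$; bound $\sum(5r_i)^\nu$ by a fixed constant times $\mathcal H^\nu(Z\cap K)$ (which is finite); and estimate $\mathcal H^\nu(\varphi(Z\cap K))\le\sum \mathcal H^\nu(\varphi(\mathrm{Box}_2(x_i,5r_i)))\le \sum C\eta^{\,\nu-\nu'}r_i^\nu\le C'\eta^{\,\nu-\nu'}\mathcal H^\nu(K)$; since $\eta$ is arbitrary and $\nu>\nu'$, the conclusion follows.

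The step I expect to be the main obstacle is the ``thin slab'' estimate for $L_x[\mathrm{Box}_2^x(0,r)]$ and its $o(r)$-fattening --- that is, passing from the algebraic statement $\operatorname{rank}\widehat D\varphi(x)<N$ to a genuine geometric measure bound $\mathcal H^\nu\big(\text{$o(r)$-nbhd of }L_x[\mathrm{Box}_2^x(0,r)]\big)=o(r^\nu)$ that is uniform in $x\in K$. The subtlety is that a horizontal homomorphism need not map the box onto a box of a sub-Carnot-group; one must track how the rank deficiency in the first layer $H_1$ propagates through higher layers via the graded bracket-generating condition, control the anisotropic dilations $\delta_r$ in the target, and combine the ``flat'' directions of the image (contributing the genuine power $r^{\nu'}$) with the transversal directions that are only $o(r)$-thick. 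This is exactly where the Ball--Box theorem and the uniformity in the Local Approximation Theorem~\ref{lat} must be invoked carefully; everything else is a standard Vitali covering/summation argument.
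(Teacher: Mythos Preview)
Your proposal is correct and follows essentially the same route as the paper: the image of a small box lies in an $o(t)$-neighborhood (in $\widetilde d_2^{\varphi(y)}$) of the degenerate set $\widehat D\varphi(y)[\mathrm{Box}_2^y(y,t)]$, which has homogeneous dimension $\nu_i\le\nu-1$, so it can be covered by $O(\varepsilon^{-(\nu-1)})$ balls of radius $O(\varepsilon t)$ (via the $5r$-covering lemma in the local Carnot group, then transferred to $\widetilde d_2$ by Theorem~\ref{lat}), giving $\mathcal H^\nu$-content $O(\varepsilon)t^\nu$; summing over a covering of $Z$ and letting $\varepsilon\to0$ finishes. The only difference is organizational---the paper covers $Z$ directly from the definition of $\mathcal H^\nu_\delta$ and applies the $5r$-lemma in the \emph{target} to count the small covering balls, whereas you apply the covering lemma in the domain---but both variants work, and the ``thin slab'' estimate you flag as the main obstacle is resolved exactly by the ball-counting you sketch.
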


\begin{proof}  The proof is based on a sharp modification of the arguments
given in~{\cite{V1}}.

Note that
$\widetilde{d}_2^{\varphi(y)}(\varphi(z),\widehat{D}\varphi(y)[z])=o(d_2(y,z))$.
By another words, if  $y\in Z$, then the image of
$\operatorname{Box}_2(y,t)=\operatorname{Box}_2^y(y,t)$ is a
subset of $o(t)$-neighborhood (with respect to
$\widetilde{d}^{\varphi(y)}_2$) of the image of $\mathcal G^y\mathbb
M\cap\operatorname{Box}_2^y(y,t)$ under $\widehat{D}\varphi(y)$.

Since at $y$ we have $\operatorname{rank}\widehat{D}\varphi(y)<N$,
then, the Hausdorff dimension (with respect to
$\widetilde{d}_2^{\varphi(y)}$) of $\widehat{D}\varphi(y)[\mathcal
G^y\mathbb M\cap\operatorname{Box}_2^y(y,t)]$ does not
exceed~$\nu-1$.
Indeed, taking into account the property~3 in Definition~{\ref{defcarnot}}, we have for any basis vector field $X_m\in H_j$, $j>1$, that there exist vector fields $X\in H_{j-1}$ and $Y\in H_1$ such that $X_m=[X,Y]$. Next, $[X,Y](v)=\sum\limits_{l:X_{k_l}\in H_{j-1}, Y_{k_l}\in H_1}a_l(v)[X_{k_l}, Y_{k_l}](v)+Z$, where $Z\in H_{j-1}$. By the second property, $\sum\limits_{l:X_{k_l}\in H_{j-1}, Y_{q_l}\in H_1}a_l(v)[X_{k_l}, Y_{q_l}](v)=\sum\limits_{l:X_{k_l}\in H_{j-1}, Y_{q_l}\in H_1}\sum\limits_{p:\operatorname{deg}X_p\leq j}a_l(v)c_{k_lq_lp}(v)X_p(v)$. By the property of vector fields of the local Carnot group,
$$\sum\limits_{l:X_{k_l}\in H_{j-1}, Y_{q_l}\in H_1}a_l(v)[\widehat{X}^y_{k_l}, \widehat{Y}^y_{q_l}](v)=\sum\limits_{l:X_{k_l}\in H_{j-1}, Y_{q_l}\in H_1}\sum\limits_{p:\operatorname{deg}X_p=j}a_l(v)c_{k_lq_lp}(y)\widehat{X}_p(v),
$$ and, by assumption $X_m=[X,Y]$ we have that the last sum equals $\widehat{X}^u_m$.
In view of the property
$$\widehat{D}\varphi(y)[X,Y]=[\widehat{D}\varphi(y) X,
\widehat{D}\varphi(y) Y],
$$ where $X,Y$ are vector fields
corresponding to the local Carnot group $\mathcal G^y\mathbb M$,
we infer that the sum of degrees of the images under
$\widehat{D}\varphi(y)$ of the basis vector fields cannot be
bigger than $\nu$; moreover, it equals $\nu$ only if
$\operatorname{rank}\widehat{D}\varphi(y)=N$. In all the other
cases, this sum does not exceed $\nu-1$.

For $0<\sigma<\infty$, take $\varepsilon>0$, and suppose without
loss of generality that  $Z$ is compact, and that both values
$o(1)$ in the definition of $hc$-differentiability and in Local
Approximation Theorem {\ref{lat}} do not exceed $\varepsilon$. Fix
$\delta>0$ and construct the covering of $Z$ by balls
$\{\operatorname{Box}_2(y_i,t_i)\}_{i\in\mathbb N,\, y_i\in Z}$,
$t_i\leq\delta$, from the definition of $\mathcal
H^{\nu}_{\delta}$, such that
$$
\omega_{\nu}\sum\limits_{i\in\mathbb N}t_i^{\nu}\leq\mathcal
H^{\nu}(Z)+\sigma.
$$

Fix $i\in\mathbb N$ and estimate $\mathcal
H^{\nu}(\varphi(\operatorname{Box}_2(y_i,t_i)))$. The image
$\varphi(\operatorname{Box}_2(y_i,t_i))$ is a subset of a
$\varepsilon t_i$-neighborhood of
$\widehat{D}\varphi(y_i)[\operatorname{Box}_2^{y_i}(y_i,t_i)]$
which has sub-Riemannian Hausdorff dimension $\nu_i$ not exceeding
$\nu-1$. Consider the family of balls
$$\{\operatorname{Box}^{\varphi(y_i)}_2(s,2\varepsilon
t_i)\}_{s\in\widehat{D}\varphi(y_i)[\operatorname{Box}_2^{y_i}(y_i,t_i)]}
$$
in $\mathcal G^{\varphi(y_i)}\widetilde{\mathbb M}$ with centers
on the set
$\widehat{D}\varphi(y_i)[\operatorname{Box}_2^{y_i}(y_i,t_i)]$ and
of radii $2\varepsilon t_i$, which covers the set
$\varphi(\operatorname{Box}_2(y_i,t_i))$. In view of the
degeneracy of $\widehat{D}\varphi(y_i)$,  we have that the volume
of the intersection
$$\operatorname{Box}^{\varphi(y_i)}_2(s,2\varepsilon t_i)\cap
\widehat{D}\varphi(y_i)[\operatorname{Box}_2^{y_i}(y_i,t_i)]
$$ is
not less than $O((\varepsilon t_i)^{\nu_i})$, where $O(1)$ is
strictly greater than zero uniformly on some compact neighborhood
(here we also take into account the left-invariance on $\mathcal
G^{\varphi(y_i)}\widetilde{\mathbb M}$, and we suppose without
loss of generality that $Z$ is a subset of such compact
neighborhood). By  Lipschitzity of $\varphi$ and degeneracy of
$\widehat{D}\varphi(y_i)$,  we have that the volume of $
\widehat{D}\varphi(y_i)[\operatorname{Box}_2^{y_i}(y_i,t_i)]$ does
not exceed~$O(t_i^{\nu_i})$ (here $O(1)$ is also uniform on the
compact neighborhood under consideration). Here $\nu_i\leq\nu-1$
depends on the degeneracy of $\widehat{D}\varphi$ at $y_i$,
namely, it equals sum of degrees of all the basis vector fields in
$\mathcal G^u\mathbb M$ on which $\widehat{D}\varphi(y_i)$ is
non-degenerate, and images of which are independent.

Since in the local
Carnot group $\mathcal G^{\varphi(y_i)}\widetilde{\mathbb M}$ the
quasimetric $\widetilde{d}^{\varphi(y_i)}_2$ is locally equivalent
to Carnot--Carath\'{e}odory metric
$\widetilde{d}^{\varphi(y_i)}_{cc}$, then we obtain applying
5$r$-Covering Lemma that there exist not more than
$$\frac{O(t_i^{\nu_i})}{O((\varepsilon t_i)^{\nu_i})}=\frac{1}{O(\varepsilon^{\nu-1})}$$
of balls
$\{\operatorname{Box}_2^{\varphi(y_i)}(s_j,r_j)\}_{j\in\mathbb N}$
covering $\varphi(\operatorname{Box}_2(y_i,t_i))$, the radii of
which vary from $2\varepsilon t_i$ to $l\cdot2\varepsilon t_i$,
and such that the corresponding balls
$\{\operatorname{Box}_2^{\varphi(y_i)}(s_j,2\varepsilon
t_i)\}_{j\in\mathbb N}$  are disjoint. Here the constant $l$
depends on the equivalence coefficients of
$\widetilde{d}^{\varphi(y_i)}_2$ and
$\widetilde{d}^{\varphi(y_i)}_{cc}$, and of 5$r$-Covering Lemma
{\cite{F}}, and $O(1)$ is strictly greater than zero uniformly in
$i\in\mathbb N$.

In view of Local Approximation Theorem {\ref{lat}} for
$\widetilde{d}_2$ and $\widetilde{d}_2^{\varphi(y_i)}$ (we may
assume without loss of generality that on the set
$\varphi(\operatorname{Box}_2(y_i,t_i))$ we have
$|\widetilde{d}_2-\widetilde{d}_2^{\varphi(y)}|\leq\varepsilon
t_i$),  the collection of the balls
$\{\operatorname{Box}_2(s_j,2r_j)\}_{j\in\mathbb N}$ covers the
set $\varphi(\operatorname{Box}_2(y_i,t_i))$. Consequently,
$$
\mathcal
H^{\nu}_{4l\varepsilon\delta}(\varphi(\operatorname{Box}_2(y_i,t_i)))\leq
(4l\varepsilon
t_i)^{\nu}\cdot\frac{1}{O(\varepsilon^{\nu-1})}=O(\varepsilon)\cdot
t_i^{\nu},
$$
where $O(\cdot)$ is uniform on $\varphi(Z)$. Thus,
$$
\mathcal H^{\nu}_{4l\varepsilon\delta}(\varphi(Z))\leq\mathcal
H^{\nu}_{4l\varepsilon\delta}\Bigl(\bigcup\limits_{i\in\mathbb
N}\varphi(\operatorname{Box}_2(y_i,t_i))\Bigr)\leq
O(\varepsilon)\sum\limits_{i\in\mathbb N}t_i^{\nu}\leq
O(\varepsilon)(\mathcal H^{\nu}(Z)+\sigma).
$$
Here $O(1)$ is uniform in all $\delta>0$ small enough.
If $\delta\to0$ then we have $\varepsilon\to0$, and the theorem
follows.
\end{proof}

\begin{thm}[The Area Formula for Smooth Mappings
{\cite{vk_area}}]\label{th_area} {\it Let $\varphi:\mathbb
M\to\widetilde{\mathbb M}$ be a contact $C^1$-mapping which is
continuously $hc$-differentiable $($i.~e., its $hc$-differential
$\widehat{D}\varphi(u)$ is continuous on $u\in\mathbb M$$)$
everywhere. Then the area formula
$$
\int\limits_{\mathbb
M}f(x)\sqrt{\det(\widehat{D}\varphi(x)^*\widehat{D}\varphi(x))}\,d\mathcal
H^{\nu}(x)=\int\limits_{\widetilde{\mathbb
M}}\sum\limits_{x:\,x\in\varphi^{-1}(y)}f(x)\,d\mathcal
H^{\nu}(y),
$$
where $f:\mathbb M\to\mathbb E$ $($here $\mathbb E$ is an
arbitrary Banach space$)$ is such that the function
$f(x)\sqrt{\det(\widehat{D}\varphi(x)^*\widehat{D}\varphi(x))}$ is
integrable, is valid. Here the Hausdorff measures are constructed
with respect to metrics $d_2$ and $\widetilde{d}_2$ with the
multiple~$\omega_{\nu}$.}
\end{thm}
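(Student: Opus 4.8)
One natural route, the one taken in \cite{vk_area}, is to deduce the identity from the classical Riemannian area formula by converting Riemannian Hausdorff measures into the sub-Riemannian ones $\mathcal H^\nu$ via their (explicitly computable) densities relative to Riemannian volume; the plan I describe proceeds more directly, by differentiation of measures, but isolates the same computational core. First, since a contact $C^1$-mapping is known to be Lipschitz with respect to $d_2$ and $\widetilde{d}_2$ on compact subsets, Theorem~\ref{zeroset} applies and gives $\mathcal H^\nu(\varphi(Z))=0$, so the degenerate set contributes nothing to either side and we may work on $\mathbb M\setminus Z$, where by Corollary~\ref{diff_cor} the $hc$-differential $\widehat{D}\varphi$ is continuous and of full rank. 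For $x\in\mathbb M\setminus Z$ the homomorphism $\widehat{D}\varphi(x)$ is a bi-Lipschitz embedding of local Carnot groups, and the $hc$-differentiability estimate together with continuity of $\widehat{D}\varphi$ upgrades this to a uniform bi-Lipschitz estimate for $\varphi$ on a small $\operatorname{Box}_2$; hence $\mathbb M\setminus Z$ splits into countably many Borel pieces on each of which $\varphi$ is injective. Measurability of the multiplicity function and this countable decomposition are standard (e.g.\ via the Riemannian area formula for the Riemannian-Lipschitz map $\varphi$, or via the Lusin--Souslin theorem), so by the Radon--Nikodym theorem and the usual reduction of an integrable $\mathbb E$-valued $f$ to simple functions it suffices to prove that for $\mathcal H^\nu$-a.e.\ $x\in\mathbb M\setminus Z$
$$
\lim_{r\to0}\frac{\mathcal H^\nu(\varphi(\operatorname{Box}_2(x,r)))}{\mathcal H^\nu(\operatorname{Box}_2(x,r))}=\sqrt{\det(\widehat{D}\varphi(x)^*\widehat{D}\varphi(x))}.
$$

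Next I would replace $\varphi$ by $\widehat{D}\varphi(x)$ at the level of measures. Fix $x\in\mathbb M\setminus Z$. By definition of the $hc$-differential, $\widetilde{d}_2^{\varphi(x)}(\varphi(w),\widehat{D}\varphi(x)[w])=o(d_2(x,w))$, and by the Local Approximation Theorem~\ref{lat}, applied on $\operatorname{Box}_2(x,r)$ and on its image, $|d_2-d_2^x|=O(r^{1+1/M})$ in the preimage and $|\widetilde{d}_2-\widetilde{d}_2^{\varphi(x)}|=O(r^{1+1/M})$ in the image, with constants uniform on a compact neighborhood; continuity of $\widehat{D}\varphi$ makes the $o(\cdot)$ uniform there as well. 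After the anisotropic dilation by $r^{-1}$ (which scales $d_2^x$- and $\widetilde{d}_2^{\varphi(x)}$-distances exactly by $r^{-1}$) the error terms are $O(r^{1/M})\to0$, so $\varphi(\operatorname{Box}_2(x,r))$ and $\widehat{D}\varphi(x)[\operatorname{Box}_2^x(x,r)]$ lie in $o(r)$-neighborhoods of one another with respect to $\widetilde{d}_2$. A $5r$-covering argument in $\mathcal G^{\varphi(x)}\widetilde{\mathbb M}$ --- where $\widetilde{d}_2^{\varphi(x)}$ is locally equivalent to the Carnot--Carath\'{e}odory metric, so that measures of small balls are under control, exactly as in the proof of Theorem~\ref{zeroset} --- then gives
$$
\mathcal H^\nu(\varphi(\operatorname{Box}_2(x,r)))=(1+o(1))\,\mathcal H^\nu\bigl(\widehat{D}\varphi(x)[\operatorname{Box}_2^x(x,r)]\bigr)\qquad (r\to0),
$$
the difference being that now the leading term is retained rather than discarded.

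It then remains to evaluate $\mathcal H^\nu(\widehat{D}\varphi(x)[\operatorname{Box}_2^x(x,r)])/\mathcal H^\nu(\operatorname{Box}_2^x(x,r))$. In exponential coordinates $\operatorname{Box}_2^x(x,r)$ becomes $\prod_{j=1}^M B_E^{n_j}(0,r^j)\subset\mathbb R^N$. Since $\widehat{D}\varphi(x)$ is a homomorphism of stratified Lie algebras carrying the first layer into the first layer, it carries the $j$-th layer into the $j$-th layer; in graded coordinates it is therefore block-diagonal, $\widehat{D}\varphi(x)=\bigoplus_{j=1}^M L_j$ with $L_j:\mathbb R^{n_j}\to\mathbb R^{\tilde n_j}$ injective, and $\widehat{D}\varphi(x)^*\widehat{D}\varphi(x)$ is block-diagonal with blocks $L_j^*L_j$. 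The image $\prod_j L_j(B_E^{n_j}(0,r^j))$ is a box in the image subgroup (with the inherited Euclidean structure), and on a Carnot group $\mathcal H^\nu$ with respect to $d_2$ (resp.\ $\widetilde{d}_2$) is a constant multiple of $N$-dimensional Lebesgue measure in exponential coordinates, with the \emph{same} constant in the preimage and on the image subgroup because the relevant $d_2$-unit balls are Euclidean boxes of the same combinatorial shape (a product of unit balls of dimensions $n_1,\ldots,n_M$). Hence the ratio telescopes over the layers --- the factors $r^{jn_j}$ cancelling against $\mathcal H^\nu(\operatorname{Box}_2^x(x,r))$ --- and equals $\prod_{j}\sqrt{\det(L_j^*L_j)}=\sqrt{\det(\widehat{D}\varphi(x)^*\widehat{D}\varphi(x))}$. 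Combined with the previous step this yields the local distortion identity and hence the area formula; the same computation also exhibits the density of $\mathcal H^\nu$ relative to Riemannian volume that underlies the alternative derivation from the classical formula.

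I expect the second step --- transferring the $\mathcal H^\nu$-estimate from the local Carnot group $\mathcal G^{\varphi(x)}\widetilde{\mathbb M}$ back to $\widetilde{\mathbb M}$ --- to be the main obstacle, precisely because $\widetilde{d}_2^{\varphi(x)}$ is \emph{not} bi-Lipschitz equivalent to $\widetilde{d}_2$: a Hausdorff-measure computation cannot simply be pushed from the group to the manifold, and the whole argument hinges on the discrepancy between the two quasimetrics being $O(r^{1+1/M})$, of strictly smaller order than the scale $r$ of the box, so that it disappears under the anisotropic rescaling --- and on this bound, together with continuity of $\widehat{D}\varphi$, being uniform in $x$ over compacta. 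The remaining ingredients --- measurability of the multiplicity, absolute continuity of $\mathcal H^\nu$ with respect to Riemannian volume, and the reduction to simple functions --- are routine.
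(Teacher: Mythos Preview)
The present paper does not prove Theorem~\ref{th_area}: it is quoted from \cite{vk_area} and used as a black box in the proof of the main Theorem~\ref{arealip}. As you yourself note (and as the introduction here confirms), the argument in \cite{vk_area} goes through the classical Riemannian area formula for the $C^1$-map $\varphi$, converting Riemannian measure to $\mathcal H^\nu$ via the explicit density. Your route is genuinely different: it is a direct differentiation-of-measures argument, and in fact it is essentially the strategy this paper uses to prove Theorem~\ref{arealip}, specialized to the smooth case where continuity of $\widehat D\varphi$ comes for free and the Egorov/density-point machinery ($\Sigma_\varepsilon$, $\Delta_{\sigma r^\nu}$, the functions $\Psi_m$) is unnecessary. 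The interesting twist is that in the paper's proof of Theorem~\ref{arealip}, Theorem~\ref{th_area} is invoked precisely at the point corresponding to your third step --- to compute $\mathcal H^\nu$ of the image of a box under the group homomorphism $\eta_i=\widehat D\varphi(y_i)$ --- whereas you propose to do that computation by hand via the block-diagonal structure of $\widehat D\varphi(x)$ in graded coordinates. If that computation is carried out carefully (in particular, checking that the normalizing constant $\omega_\nu$ really is the same on the image subgroup, which is where the choice of $d_2$ over $d_\infty$ pays off), your argument would give a self-contained proof of Theorem~\ref{th_area} that bypasses the Riemannian formula altogether; what the \cite{vk_area} route buys instead is that the classical area formula absorbs all of the measure-theoretic work, and one only has to compute a density.

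Your identification of the second step as the delicate point is accurate: that $\varphi(\operatorname{Box}_2(x,r))$ and $\widehat D\varphi(x)[\operatorname{Box}_2^x(x,r)]$ lie in mutual $o(r)$-neighbourhoods does not by itself give $(1+o(1))$-equality of their $\mathcal H^\nu$-measures, since these are $N$-dimensional sets sitting in the $\widetilde N$-dimensional target and the $o(r)$-neighbourhood is taken in $\widetilde{\mathbb M}$, not inside an $N$-dimensional submanifold. You need to argue that the $\mathcal H^\nu$-measure of the $o(r)$-collar around $\widehat D\varphi(x)[\operatorname{Box}_2^x(x,r)]$ is $o(r^\nu)$; this is where the $5r$-covering argument and the Local Approximation Theorem must be combined with an estimate on the boundary of the image box (which, being a finite union of $C^1$-surfaces of sub-Riemannian dimension $\le\nu-1$, does have thin collars). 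The paper handles the analogous issue in the proof of Theorem~\ref{arealip} through inclusions of the form $\eta_i^{-1}(\operatorname{Box}_2(x_i,r_i(1-o(1))))\subset\varphi^{-1}(\operatorname{Box}_2(x_i,r_i))\subset\eta_i^{-1}(\operatorname{Box}_2(x_i,r_i(1+o(1))))$ together with the known scaling of $\mathcal H^\nu$ on the group, which is cleaner than a direct collar estimate and would work equally well in your setting.
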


\begin{thm}\label{arealip}
 {\it Suppose that $D\subset\mathbb M$ is a
measurable set, and the mapping $\varphi:D\to\widetilde{\mathbb M}$ is
Lipschitz with respect to sub-Riemannian quasimetrics $d_2$ and
$\widetilde{d}_2$. Then the area formula
$$
\int\limits_{D}f(x)\sqrt{\det\bigl(\widehat{D}\varphi^*(x)\widehat{D}\varphi(x)\bigr)}\,d\mathcal
H^{\nu}(x)=\int\limits_{\varphi(D)}\sum\limits_{x:\,x\in\varphi^{-1}(y)}f(x)\,d\mathcal
H^{\nu}(y),
$$
where $f:D\to\mathbb E$ $($here $\mathbb E$ is an
arbitrary Banach space$)$ is such that the function
$f(x)\sqrt{\det(\widehat{D}\varphi(x)^*\widehat{D}\varphi(x))}$ is
integrable, is valid. Here the Hausdorff measures are constructed
with respect to metrics $d_2$ and $\widetilde{d}_2$ with the
multiple~$\omega_{\nu}$.}
\end{thm}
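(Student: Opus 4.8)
The plan is to reduce Theorem~\ref{arealip} to a uniform local statement about the $\mathcal H^{\nu}$-distortion of $\varphi$, to prove that statement on a countable family of ``good'' pieces of $D$ via a degree argument in the nilpotent tangent cone, and then to assemble the global identity by Vitali covering. First come the usual reductions: by linearity in $f$, approximation of a Banach-valued integrable function by simple functions, and monotone convergence, it suffices to treat $f=\chi_A$ with $A\subseteq D$ measurable (measurability of $y\mapsto\#(\varphi^{-1}(y)\cap A)$ for Lipschitz $\varphi$ is classical \cite{F}); since $\mathcal J^{SR}(\varphi,\cdot)$ vanishes on $Z=\{\operatorname{rank}\widehat D\varphi<N\}$ while $\mathcal H^{\nu}(\varphi(Z))=0$ by Theorem~\ref{zeroset}, I may replace $A$ by $A\setminus Z$ and so assume $\operatorname{rank}\widehat D\varphi(x)=N$ for $\mathcal H^{\nu}$-a.e.\ $x\in A$; and since both sides of \eqref{mainarea} are countably additive in $A$ (the right-hand side by monotone convergence on the sum over preimages), it is enough to verify the formula for each member of a countable partition of $A$. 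Using a Lusin argument for the measurable map $x\mapsto\widehat D\varphi(x)$, an Egorov-type argument for the a.e.\ convergence (valid by \cite{V2}) of $\eta_\rho(x):=\sup\{\widetilde d_2(\varphi(w),\widehat D\varphi(x)[w])/d_2(x,w):0<d_2(x,w)<\rho\}$ to $0$ and for the density of $A$ at its points, followed by subdivision, I write $A=\bigsqcup_k A_k$ up to a null set with each $A_k$ compact, $\widehat D\varphi|_{A_k}$ continuous, of full rank and of small oscillation, $\mathcal J^{SR}(\varphi,\cdot)$ continuous and positive on $A_k$, the $hc$-approximation uniform on $A_k$, and the density of $A_k$ at its points uniform. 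I also extend $\varphi$ to a Lipschitz map $\bar\varphi$ on a neighbourhood (coordinatewise McShane in normal coordinates); since $\operatorname{dist}(w,A_k)=o(d_2(x,w))$ as $w\to x$ for a density point $x$ of $A_k$, the extension remains $hc$-differentiable at each point of $A_k$ with the same $hc$-differential and uniform defect, the Local Approximation Theorem~\ref{lat} being used to pass between $d_2$ and $d_2^x$.

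Fix $k$ and write $L_x=\widehat D\varphi(x)$; being an injective homomorphism of local Carnot groups it is bi-Lipschitz, hence a homeomorphism of $\operatorname{Box}_2^x(x,t)$ onto $L_x[\operatorname{Box}_2^x(x,t)]$, a set of $\widetilde d_2^{\varphi(x)}$-measure $\mathcal J^{SR}(\varphi,x)\,\mathcal H^{\nu}(\operatorname{Box}_2(x,t))$ by Theorem~\ref{th_area} applied to the $C^1$ contact map $L_x$ together with dilation homogeneity. On this basis I establish, uniformly for $x\in A_k$ and $t\to0$, two estimates for $\operatorname{Box}_2(x,t)=\operatorname{Box}_2^x(x,t)$. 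Upper estimate: $hc$-differentiability and Theorem~\ref{lat} place $\bar\varphi(\operatorname{Box}_2(x,t))$ in the $o(t)$-neighbourhood of $L_x[\operatorname{Box}_2^x(x,t)]$ in $\widetilde d_2$, whence a covering by small balls and a $5r$-covering argument as in the proof of Theorem~\ref{zeroset} give $\mathcal H^{\nu}(\bar\varphi(\operatorname{Box}_2(x,t)))\le(1+o(1))\,\mathcal J^{SR}(\varphi,x)\,\mathcal H^{\nu}(\operatorname{Box}_2(x,t))$. Lower estimate: since $\bar\varphi$ is $C^0$-close (within $o(t)$ in $\widetilde d_2^{\varphi(x)}$, by $hc$-differentiability and Theorem~\ref{lat}) to the homeomorphism $L_x$ on $\overline{\operatorname{Box}_2(x,t)}$, a topological degree computation in the tangent cone $\mathcal G^{\varphi(x)}\widetilde{\mathbb M}$ — homotoping $\bar\varphi$ to $L_x$ inside the group, the homotopy staying within $o(t)$ of $L_x$ and hence avoiding every $y$ with $\widetilde d_2^{\varphi(x)}(y,L_x[\partial\operatorname{Box}_2^x(x,t)])>\varepsilon t$ — shows that $\bar\varphi(\operatorname{Box}_2(x,t))$ contains a set $V_{x,t}$ of $\mathcal H^{\nu}$-measure $\ge(1-o(1))\,\mathcal J^{SR}(\varphi,x)\,\mathcal H^{\nu}(\operatorname{Box}_2(x,t))$, each point of which has a $\bar\varphi$-preimage in $\operatorname{Box}_2(x,t)$.

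Assembly on $A_k$ then follows standard lines. For ``$\le$'' I use the elementary identity $\int_{\widetilde{\mathbb M}}\#(\varphi^{-1}(y)\cap F)\,d\mathcal H^{\nu}(y)=\sup_{F=\sqcup_jF_j}\sum_j\mathcal H^{\nu}(\varphi(F_j))$, combined with the upper estimate applied to each $F_j\subseteq A_k$ through a Vitali cover by small boxes and the continuity of $\mathcal J^{SR}$, to obtain $\sum_j\mathcal H^{\nu}(\varphi(F_j))\le(1+\varepsilon)\int_{F}\mathcal J^{SR}(\varphi,\cdot)\,d\mathcal H^{\nu}$ for $F\subseteq A_k$; taking the supremum over partitions and $\varepsilon\to0$ gives ``$\le$''. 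For ``$\ge$'' I Vitali-cover $A_k$ by pairwise disjoint small boxes $\operatorname{Box}_2(x_i,t_i)$ centred at density points $x_i$ of $A_k$ and exhausting $A_k$ up to a null set, put $V_i'=V_{x_i,t_i}\setminus\bar\varphi(\operatorname{Box}_2(x_i,t_i)\setminus A_k)$ (the subtracted image has measure $o(t_i^{\nu})$ by Lipschitzness and uniform density, so $\mathcal H^{\nu}(V_i')\ge(1-o(1))\,\mathcal J^{SR}(\varphi,x_i)\,\mathcal H^{\nu}(\operatorname{Box}_2(x_i,t_i))$), observe that each $y\in V_i'$ has a preimage in $\operatorname{Box}_2(x_i,t_i)\cap A_k$ and that disjointness of the boxes forces $\#(\varphi^{-1}(y)\cap A_k)\ge\sum_i\chi_{V_i'}(y)$, and conclude $\int_{\widetilde{\mathbb M}}\#(\varphi^{-1}(y)\cap A_k)\,d\mathcal H^{\nu}(y)\ge\sum_i\mathcal H^{\nu}(V_i')\ge(1-o(1))\int_{A_k}\mathcal J^{SR}(\varphi,\cdot)\,d\mathcal H^{\nu}$. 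Together the two inequalities yield the area formula for $f=\chi_{A_k}$ for every $k$; summing over $k$ by monotone convergence and undoing the initial reductions proves Theorem~\ref{arealip}.

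The hard part is the lower (degree) estimate. Unlike in the Euclidean and Riemannian settings, a non-degenerate $hc$-differential does \emph{not} make $\varphi$ locally injective — $hc$-differentiability controls $\varphi(w)$ only relative to the base point, which is precisely the ``partial'' approximation by the tangent homomorphism referred to in the introduction — so no inverse function theorem is available, and the covering of $L_x[\operatorname{Box}_2^x(x,t)]$ by $\bar\varphi(\operatorname{Box}_2(x,t))$ must be forced by the degree computation in the nilpotent tangent cone, with every closeness estimate between the intrinsic quasimetric $d_2$ and the frozen quasimetrics $d_2^x,\widetilde d_2^{\varphi(x)}$ kept uniform over the compact piece $A_k$ by means of Theorem~\ref{lat}. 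Controlling the boundary collars of the boxes and the behaviour of the extension $\bar\varphi$ off $D$, and checking that the $d_2$-boxes form an admissible Vitali relation for $\mathcal H^{\nu}$, is the remaining bookkeeping.
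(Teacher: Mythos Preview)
Your overall architecture is reasonable and several pieces (the reduction to $f=\chi_A$, removal of $Z$, the Lusin--Egorov decomposition into compact pieces $A_k$ with continuous $\widehat D\varphi$, the upper estimate via $hc$-differentiability and a $5r$-covering, and the Vitali assembly) are fine. The genuine gap is the lower estimate, and specifically the extension step that the degree argument rests on.

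``Coordinatewise McShane in normal coordinates'' does not give what you need. If $\deg\widetilde X_j=k>1$, then on $A_k$ the $j$-th coordinate satisfies $|\varphi_j(w)-\varphi_j(w')|\le C\,d_2(w,w')^{k}$; a McShane extension only preserves a Lipschitz (exponent $1$) bound, so for the nearest $w'\in A_k$ with $d_2(w,w')=o(t)$ you get $|\bar\varphi_j(w)-\varphi_j(w')|=o(t)$, and the contribution of this coordinate to $\widetilde d_2$ is only $o(t)^{1/k}$, not $o(t)$. Hence you cannot conclude $\widetilde d_2(\bar\varphi(w),L_x[w])=o(t)$ for \emph{all} $w\in\operatorname{Box}_2(x,t)$, which is exactly what the homotopy in your degree computation needs on $\partial\operatorname{Box}_2(x,t)$ (a null set where density of $A_k$ gives no direct control). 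This is not bookkeeping: a sub-Riemannian Whitney-type extension producing a contact $C^1_H$ map with $hc$-differential agreeing with $\widehat D\varphi$ on $A_k$ would do it, but that is a deep statement not available in this generality, and certainly not a consequence of McShane. This is precisely the obstruction the introduction alludes to when it says that $d_{\mathbb M}(\varphi(w),\varphi(v))=(1+o(1))\,\widehat d(\widehat D\varphi(u)[v],\widehat D\varphi(u)[w])$ cannot be obtained.

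The paper sidesteps the extension problem entirely by a different mechanism. It first passes (via Federer) to a measurable piece on which $\varphi$ is \emph{bi-Lipschitz}, hence injective; then the set function $\Phi(E)=\mathcal H^{\nu}(\varphi(E\cap A))$ is an absolutely continuous measure, and the whole proof reduces to computing its density $\Phi'(g)$. That density is obtained not by degree theory but by working with covers of $\varphi(\operatorname{Box}_2(g,r))$ in the image: the key identity is \eqref{eq1}, which comes from bi-Lipschitzness plus $hc$-differentiability, together with the smooth area formula (Theorem~\ref{th_area}) applied to the \emph{linear} maps $\eta_i=\widehat D\varphi(y_i)$, allowing one to trade each covering radius $r_i^\nu$ for $\mathcal J^{SR}(\varphi,y_i)\cdot\widehat{\mathcal H^{\nu}}^{y_i}(\eta_i^{-1}(\operatorname{Box}_2(x_i,r_i)))$ and then compare $\eta_i^{-1}$- and $\varphi^{-1}$-preimages \emph{inside $A_\varepsilon$} via \eqref{eq2}. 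No extension of $\varphi$ off the set is ever needed. If you want to salvage your route, the cleanest fix is to insert the bi-Lipschitz reduction at the outset; but then $\varphi$ is already injective on each piece, the degree argument becomes superfluous, and you are essentially back to the paper's method.
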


\begin{proof} {\sc $1^{\text{\sc st}}$ Step.} Without loss of generality, we may assume that $D\subset\mathcal U$. In view of Theorem
{\ref{zeroset}} we have $\mathcal H^{\nu}(\varphi(Z))=0$. It is
left to prove the area formula for the set $A=D\setminus Z$. We
may assume without loss of generality~{\cite{F}} that on the
measurable set $A$ we have
\begin{equation}\label{bilip}
C_1d_2(u,v)\leq\widetilde{d}_2(\varphi(u),\varphi(v))\leq
C_2d_2(u,v)
\end{equation}
for some $0<C_1,C_2<\infty$,
$\operatorname{rank}\widehat{D}\varphi(z)=N$ for all points of
$hc$-differentiability of the mapping $\varphi$, and the set $A$
has the finite measure. For convenience, consider the case
$f\equiv1$. Note that the set function defined on open sets in $E\subset\mathbb M$,
$$
\Phi(E)=\int\limits_{\varphi(E\cap A)}\,d\mathcal H^{\nu}(y)
$$
is absolutely continuous (since $\varphi$ is a Lipschitz mapping:
indeed, it is easy to see that there exists such
$Q=Q(\varphi)<\infty$ that
$$\mathcal H^{\nu}(\varphi(E\cap
D))\leq Q\mathcal H^{\nu}(E\cap D)$$ for any set $E$) and
additive. Consequently {\cite{F}},
$$
\Phi(A)=\int\limits_{A}\Phi^{\prime}(x)\,d\mathcal H^{\nu}(x).
$$
Our goal is to show that
$$
\Phi^{\prime}(y)=\sqrt{\det(\widehat{D}\varphi(y)^*\widehat{D}\varphi(y))}
$$
almost everywhere.

{\sc $2^{\text{\sc nd}}$ Step.} For each $\varepsilon>0$, there
exists a set $\Sigma_{\varepsilon}$ of the $\mathcal
H^{\nu}$-measure not exceeding $\varepsilon$, such that on
$A\setminus\Sigma_{\varepsilon}$ the mapping $\varphi$ is
continuously $hc$-differentiable, i.~e., the $hc$-differential
$\widehat{D}\varphi(z)$, $z\in A\setminus\Sigma_{\varepsilon}$, is
continuous {\cite[Lemma~4.6]{V2}}. The definition of the
$hc$-differentiability implies for $w,u\in A$ (here by our assumption $u$ is a point of $hc$-differentiability of $\varphi$):
$$
\widetilde{d}_2(\varphi(w),\varphi(u))=\widetilde{d}_2(\widehat{D}\varphi(u)[w],\varphi(u))+o(d_2(w,u)),
$$
where $o(1)\to0$ as $w\to u$. We also may assume without loss of generality that $o(\cdot)$ is uniform in $u\in A\setminus\Sigma_{\varepsilon}$.
Since we have the assumption that
$d_2(w,u)\geq\frac{1}{C_2}\widetilde{d}_2(\varphi(w),\varphi(u))$,
it follows that
\begin{equation}\label{eq1}
\widetilde{d}_2(\varphi(w),\varphi(u))(1+o(1))=\widetilde{d}_2(\widehat{D}\varphi(u)[w],\varphi(u)).
\end{equation}
Here $o(1)$ is uniform in $u\in A\setminus\Sigma_{\varepsilon}$.

{\sc $3^{\text{\sc rd}}$ Step.} Fix $\varepsilon>0$ and prove the
area formula for $A_{\varepsilon}=A\setminus\Sigma_{\varepsilon}$.
Hereinafter in this proof, for the set $E\subset\mathbb M$, $E\varsubsetneq
A_{\varepsilon}$, the symbol $\varphi(E)$ denotes $\varphi(E\cap
A_{\varepsilon})$. Fix $\sigma>0$ and $r>0$, and consider the set
$\Delta_{\sigma r^{\nu}}$, $\mathcal H^{\nu}(\Delta_{\sigma
r^{\nu}})<\sigma r^{\nu}$, such that on
$A_{\varepsilon}\setminus\Delta_{\sigma r^{\nu}}$, measurable
functions
$$\Psi_m(y)=\frac{m^{\nu}}{\omega_{\nu}}
\int\limits_{\operatorname{Box}_2(y,1/m)\cap
A_{\varepsilon}}\,d\mathcal H^{\nu}(x),
$$ $m\in\mathbb N$,
converge uniformly to the unity~{\cite{F}}.

We may assume without
loss of generality that $\Delta_{\sigma
r^{\nu}}\subset\Delta_{\sigma t^{\nu}}$ for $r<t$. Indeed, it
is sufficient to construct for each $l\in\mathbb N$ a set
$\widetilde{\Delta}_l$, $\mathcal H^{\nu}$-measure of which does
not exceed
$\sigma\Bigl(\frac{1}{(l)^{\nu}}-\frac{1}{(l+1)^{\nu}}\Bigr)$,
and such that functions $\Psi_m$ converge uniformly to the unity on $A_{\varepsilon}\setminus\widetilde{\Delta}_l$. Next, for $r\in(1/l,1/(l-1)]$, put
$\Delta_{\sigma
r^{\nu}}=\bigcup\limits_{k=l}^{\infty}\widetilde{\Delta}_k$; it
is easy to see that its $\mathcal H^{\nu}$-measure is not more
than $\sigma/l^{\nu}<\sigma r^{\nu}$. Moreover, for $r<t$
we have $\Delta_{\sigma r^{\nu}}\subset\Delta_{\sigma
t^{\nu}}$. We will need this property at the end of the proof when $r,\sigma\to0$ not to ``loose'' points we have considered.

Take $r>0$ small enough, a density point $g\in
A_{\varepsilon}\setminus\Delta_{\sigma r^{\nu}}$ of the set
$A_{\varepsilon}$, and an open ball $\operatorname{Box}_2(g,r)$.
Since $\mathcal H^{\nu}(\Delta_{\sigma r^{\nu}})\leq\sigma
r^{\nu}$, it follows that $\mathcal H^{\nu}(\varphi(\Delta_{\sigma
r^{\nu}}))\leq Q\sigma r^{\nu}$, and
$$\mathcal H^{\nu}(\varphi(\operatorname{Box}_2(g,r)))\leq
\mathcal
H^{\nu}(\varphi(\operatorname{Box}_2(g,r)\setminus\Delta_{\sigma
r^{\nu}}))+Q\sigma r^{\nu}.
$$

Fix $\tau>0$ and (for fixed $\sigma$ and $r$) choose such
$\delta\leq\delta_0(\tau,\sigma,r)$, $\delta_0\in(0,\sigma r)$, that
for $1/m\leq\min\{\delta,\delta C_1\}$, where $C_1$ is taken from {\eqref{bilip}}, we have
$\Psi_m(y)\geq 1-\tau$ for $y\in\operatorname{Box}_2(g,r)\cap
A_{\varepsilon}\setminus\Delta_{\sigma r^{\nu}}$ (it is possible in view of the uniform convergence of $\Psi_m(y)$ to the unity on $A_{\varepsilon}\setminus\Delta_{\sigma r^{\nu}}$).

For the chosen
$\delta>0$, construct the covering
$\{\operatorname{Box}_2(x_i,r_i)\}_{i\in\mathbb N}$ of the image
$\varphi(\operatorname{Box}_2(g,r)\setminus\Delta_{\sigma
r^{\nu}})$ from the definition of $\mathcal H^{\nu}_{\delta}$.

The definition of the set $\Delta_{\sigma r^{\nu}}$ implies that
for any covering by balls
$\{\operatorname{Box}_2(x_i,r_i)\}_{i\in\mathbb N}$ of the image
$\varphi(\operatorname{Box}_2(g,r)\setminus\Delta_{\sigma
r^{\nu}})$ from the definition of $\mathcal H^{\nu}$-measure,
the centers $x_i$ are images of the points
$y_i\in\operatorname{Box}_2(g,r)\cap
A_{\varepsilon}\setminus\Delta_{\sigma r^{\nu}}$ that are
density points of the set $\operatorname{Box}_2(g,r)\cap
A_{\varepsilon}$.

{\sc $4^{\text{\sc th}}$ Step.} To each point
$y_i=\varphi^{-1}(x_i)$, assign the $\mathcal P$-differentiable
mapping $\eta_i$ of local Carnot groups defined as follows:
$\mathcal G^{y_i}\mathbb M\ni
w\overset{\eta_i}\mapsto\widehat{D}\varphi(y_i)[w]\in\mathcal
G^{\varphi(y_i)}\widetilde{\mathbb M}$.
Each such mapping belongs to the class $C^1$ (in the classical sense),
and it is contact (as a mapping of local Carnot groups) since
$\eta_i(w)=\theta_{x_i}\circ L\circ\theta_{y_i}^{-1}(w)$. Here
the linear mapping $L$ is defined by the matrix of the
$hc$-differential $\widehat{D}\varphi(y_i)$ in the following sense: first, the mapping $\theta_{y_i}^{-1}$ ``calculates'' the coordinates of $w$ with respect to $y_i$, then the linear mappings $L$ matrix of which coincides with the matrix of the $hc$-differential $\widehat{D}\varphi$ in the bases $\{\widehat{X}^{y_i}_j\}_{j=1}^N$ and  $\{\widehat{X}^{x_i}_k\}_{k=1}^{\widetilde{N}}$ acts on the obtained point of $\mathbb R^N$, and finally, the mapping $\theta_{x_i}$ assigns a point on $\mathbb M$ to this image point from $\mathbb R^N$. Recall that in view of the property $$\widehat{D}\varphi(y_i)[X,Y]=[\widehat{D}\varphi(y_i) X,
\widehat{D}\varphi(y_i) Y],
$$ where $X,Y$ are vector fields
corresponding to the local Carnot group $\mathcal G^{y_i}\mathbb M$ in the definition of the $hc$-differential $\widehat{D}\varphi$,
its matrix has block-diagonal structure in the bases $\{\widehat{X}^{y_i}_j\}_{j=1}^N$ and  $\{\widehat{X}^{x_i}_k\}_{k=1}^{\widetilde{N}}$. Besides of this,
$\eta_i$ is continuously $\mathcal P$-differentiable (see Corollary~{\ref{diff_cor}}; {\cite{pan1}}), and
$\widehat{D}\eta_i(v)=\widehat{D}\varphi(y_i)$ for all $v$ close
enough to~$y_i$.
Indeed,
$d_2^{x_i}(\eta_i(w),\widehat{D}\varphi(y_i)[w])=0=o(d_2^{y_i}(v,w))$.

Next, in the definition of the value
$$\mathcal H^{\nu}_{\delta}(\varphi(\operatorname{Box}_2(g,r)\setminus\Delta_{\sigma
r^{\nu}})),
$$ to
each ball $\operatorname{Box}_2(x_i,r_i)$, there corresponds the
summand $\omega_{\nu}r_i^{\nu}$. Fix $i\in\mathbb N$. In view of
the area formula for Carnot groups {\cite{vk_area}} (see
Theorem~{\ref{th_area}}), for each mapping $\eta_i$, $i\in\mathbb
N$, we have
$$\omega_{\nu}r_i^{\nu}=\sqrt{\det(\widehat{D}\varphi^*(y_i)\widehat{D}\varphi(y_i))}
\cdot\widehat{{\mathcal
H}^{\nu}}^{y_i}(\eta_i^{-1}(\operatorname{Box}_2(x_i,r_i))),
$$ where the
symbol $\widehat{{\mathcal
H}^{\nu}}^{y_i}$ denotes $\mathcal
H^{\nu}$-measure in the local Carnot group~$\mathcal
G^{y_i}\mathbb M$ with respect to $d_2^{y_i}$.

Now, consider the sets
$\eta_i^{-1}(\operatorname{Box}_2(x_i,r_i))$ and
$\varphi^{-1}(\operatorname{Box}_2(x_i,r_i))\cap A_{\varepsilon}$.
Note that, under the mapping $\eta_i$, the preimage of an open set
$\operatorname{Box}_2(x_i,r_i)$ is also open, moreover, it has a
boundary consisting of the finite number of surfaces of the
class~$C^1$. In view of {\eqref{eq1}} (for $u=y_i$), all points of
the set $\varphi^{-1}(\operatorname{Box}_2(x_i,r_i))\cap
A_{\varepsilon}$ are contained in an $o(r_i)$-neighborhood of the
set $\eta_i^{-1}(\operatorname{Box}_2(x_i,r_i))$.

Indeed, it follows from the fact that if $w\in \varphi^{-1}(\operatorname{Box}_2(x_i,r_i))\cap
A_{\varepsilon}$ then
\begin{equation}\label{phieta}
d_2(\varphi(w),x_i)=(1+o(1))d_2(\eta_i(w),x_i),
\end{equation}
and consequently $\eta_i(w)\in\operatorname{Box}_2(x_i, r_i(1+o(1)))$. Here $o(1)$ is uniform in all $i\in\mathbb N$ due to the choice of $A_{\varepsilon}$.

Besides of this,
according to {\eqref{eq1}} (for $u=y_i$), all the points of the
set $A_{\varepsilon}$, lying inside
$\eta_i^{-1}(\operatorname{Box}_2(x_i,r_i))$ and such that the
distance to $\partial[\eta_i^{-1}(\operatorname{Box}_2(x_i,r_i))]$
is more than $o(r_i)$, belong to the set
$\varphi^{-1}(\operatorname{Box}_2(x_i,r_i))\cap A_{\varepsilon}$. Indeed, if
$$
w\in\eta_i^{-1}(\operatorname{Box}_2(x_i,r_i(1-o(1))))\ \text{
then }\ \widetilde{d}_2(\varphi(w),x_i)\leq r
$$
for suitable values of $o(1)$ (see {\eqref{phieta}}). Here $o(1)$ is uniform in all $i\in\mathbb N$.

Since $y_i\in\operatorname{Box}_2(g,r)\cap
A_{\varepsilon}\setminus\Delta_{\sigma r^{\nu}}$, we have
\begin{multline}\label{eq2}
{\mathcal
H}^{\nu}(\eta_i^{-1}(\operatorname{Box}_2(x_i,r_i)))(1+o(1))\\\geq\mathcal
H^{\nu}(\varphi^{-1}(\operatorname{Box}_2(x_i,r_i))\cap A_{\varepsilon})
\geq\mathcal H^{\nu}(\eta_i^{-1}[\operatorname{Box}_2(x_i,r_i(1-o(1)))]\cap A_{\varepsilon})\\
\geq(1-o(1)){\mathcal
H}^{\nu}(\eta_i^{-1}(\operatorname{Box}_2(x_i,r_i)))-\tau(1+o(1))\mathcal H^{\nu}(\operatorname{Box}_2(x_i,r_i/C_1)),
\end{multline}
where $o(1)\to0$ as $r_i\to0$ uniformly in all $x_i$, $i\in\mathbb N$, by the choice of $\delta>0$.

{\sc $5^{\text{\sc th}}$ Step.} Theorem~{\ref{th_area}}, the equalities $\widehat{D}\eta_i\equiv\widehat{D}\varphi(y_i)$, $i\in\mathbb N$, and the continuity of
the $hc$-differential $\widehat{D}\varphi$ imply
\begin{multline}\label{eq3}
\sum\limits_{i\in\mathbb
N}\omega_{\nu}r_i^{\nu}=\sum\limits_{i\in\mathbb
N}\sqrt{\det(\widehat{D}\varphi^*(y_i)\widehat{D}\varphi(y_i))}
\cdot\widehat{{\mathcal
H}^{\nu}}^{y_i}(\eta_i^{-1}(\operatorname{Box}_2(x_i,r_i)))\\
=
\Bigl(\sqrt{\det(\widehat{D}\varphi^*(g)\widehat{D}\varphi(g))}(1+o(1))\Bigr)\cdot\sum\limits_{i\in\mathbb
N}\widehat{{\mathcal
H}^{\nu}}^{y_i}(\eta_i^{-1}(\operatorname{Box}_2(x_i,r_i))),
\end{multline}
where $o(1)\to0$ as $r\to0$. Thus, the sum
$\sum\limits_{i\in\mathbb N}\omega_{\nu}r_i^{\nu}$ is close to
the minimal value if and only if the sum $\sum\limits_{i\in\mathbb
N}\widehat{{\mathcal
H}^{\nu}}^{y_i}(\eta_i^{-1}(\operatorname{Box}_2(x_i,r_i)))$ is also
close to its minimal value. Since on $\mathbb M$ we have $\mathcal
H^{\nu}=\widehat{{\mathcal
H}^{\nu}}^{y_i}(1+o(1))$ where $o(1)\to0$ as the points of a measured set converge to
$y_i$ (it is enough to consider their expressions via Riemannian measures),  $i\in\mathbb N$, we may consider the sum $\sum\limits_{i\in\mathbb
N}{\mathcal
H}^{\nu}(\eta_i^{-1}(\operatorname{Box}_2(x_i,r_i)))$ instead of $\sum\limits_{i\in\mathbb
N}\widehat{{\mathcal
H}^{\nu}}^{y_i}(\eta_i^{-1}(\operatorname{Box}_2(x_i,r_i)))$. Now, we calculate this value. Since
the ``balls''
\begin{multline*}
\{\eta^{-1}(\operatorname{Box}_2(x,t)):\ x=\varphi(y),\
y\in\operatorname{Box}_2(g,r)\cap
A_{\varepsilon}\setminus\Delta_{\sigma r^{\nu}},\\
\eta(w)=\widehat{D}\varphi(y)[w],\ t\in(0,\min\{\delta, \delta C_1\}),\
\eta^{-1}(\operatorname{Box}_2(x,t))\subset\operatorname{Box}_2(g,r)\}
\end{multline*}
have the doubling condition (with respect to the measure $\mathcal
H^{\nu}$ in view of the relation $\mathcal
H^{\nu}=\widehat{{\mathcal
H}^{\nu}}^{y_i}(1+o(1))$, see above), then Vitali Covering Theorem implies the existence of the collection
$\{\eta_i^{-1}(\operatorname{Box}_2(x_i,r_i))\}_{i\in\mathbb N}$,
covering the set $\operatorname{Box}_2(g,r)\cap
A_{\varepsilon}\setminus\Delta_{\sigma r^{\nu}}$ up to a set of $\mathcal H^{\nu}$-measure zero. For this (remaining) set, there exists an at most countable covering by ``balls''
$\{\eta_j^{-1}(\operatorname{Box}_2(x_j,t_j))\}_{j\in\mathbb N}$,
with the sum of their $\mathcal H^{\nu}$-measures less than $\sigma r^{\nu}$.

Relation {\eqref{eq1}} implies that
$\bigcup\limits_{i\in\mathbb
N}\operatorname{Box}_2(x_i,\tilde{r}_i)\cup\bigcup\limits_{j\in\mathbb
N}\operatorname{Box}_2(x_j,\tilde{t}_j)
\supset\varphi(\operatorname{Box}_2(g,r)\setminus\Delta_{\sigma
r^{\nu}})$, where $\tilde{r}_i=r_i(1+o(1))$ and
$\tilde{t}_j=t_j(1+o(1))$, and $o(1)$ are uniform in all $i,j$.
Moreover, the sum $S$ of $\mathcal H^{\nu}$-measures of the preimages of these balls under the corresponding mappings $\eta_i$ can be estimated as
$$
[\mathcal
H^{\nu}(\operatorname{Box}_2(g,r))+\sigma r^{\nu}](1+o(1))\geq
S \geq\mathcal H^{\nu}(\operatorname{Box}_2(g,r)\cap
A_{\varepsilon}\setminus\Delta_{\sigma r^{\nu}}),
$$ where
$o(1)\to0$ as $r_i,\,t_j\to0$, $i,j\in\mathbb N$. In view of
{\eqref{eq2}}, we have
\begin{multline}\label{sumest}
S\cdot(1+o(1))\geq\sum\limits_{i\in\mathbb
N}\mathcal H^{\nu}(\varphi^{-1}(\operatorname{Box}_2(x_i,r_i))
\cap A_{\varepsilon}) \\
+\sum\limits_{j\in\mathbb N}\mathcal
H^{\nu}(\varphi^{-1}(\operatorname{Box}_2(x_j,t_j)) \cap
A_{\varepsilon})\geq(1-o(1)-O(\tau))S,
\end{multline}
where $o(1)\to0$ as $\delta\to0$, and $O(1)$ is bounded uniformly on $A_{\varepsilon}$.

Note that, the sum $\sum\limits_{k\in\mathbb N}\mathcal
H^{\nu}(\varphi^{-1}(\operatorname{Box}_2(x_k,r_k)) \cap
A_{\varepsilon})$ cannot be less than $\mathcal H^{\nu}(\operatorname{Box}_2(g,r)\cap
A_{\varepsilon}\setminus\Delta_{\sigma r^{\nu}})$ in case of any covering of
$\varphi(\operatorname{Box}_2(g,r)\setminus\Delta_{\sigma
r^{\nu}})$ by any collection
$\{\operatorname{Box}_2(x_k,r_k)\}_{k\in\mathbb N}$.
Consequently, we have for the value (see {\eqref{sumest}})
\begin{multline*}
\frac{\mathcal H^{\nu}(\operatorname{Box}_2(g,r)\cap
A_{\varepsilon}\setminus\Delta_{\sigma r^{\nu}})}{1+o(1)}
\leq S\\\leq\frac{1}{1-o(1)-O(\tau)}[\mathcal
H^{\nu}(\operatorname{Box}_2(g,r)\cap
A_{\varepsilon}\setminus\Delta_{\sigma r^{\nu}})+O(1)\sigma r^{\nu}],
\end{multline*} where the values $O(1)$
are bounded uniformly in all small $\delta>0$ and small $r>0$, and $o(1)\to0$ as $\delta\to0$,
is indeed close to the minimal one.

Since we have $\tau\to0$ and $o(1)\to0$ while $\delta\to0$, and while $r\to0$ we can take
$\sigma=\sigma(r)\to0$, then, taking into account the fact that $g$ is the density point of
the set $A_{\varepsilon}$, and $\mathcal
H^{\nu}(\Delta_{\sigma r^{\nu}})=o(r^{\nu})$, where $o(1)\to0$
as $r\to0$, we deduce from {\eqref{eq3}} that
$$
\lim\limits_{r\to0}\frac{\mathcal
H^{\nu}(\varphi(\operatorname{Box}_2(g,r)\cap
A_{\varepsilon}))}{\mathcal
H^{\nu}(\varphi(\operatorname{Box}_2(g,r)\cap
A_{\varepsilon}\setminus\Delta_{\sigma r^{\nu}}))}=1
\text{ and }\lim\limits_{r\to0}\frac{S}{\omega_{\nu}r^{\nu}}=1.
$$
Consequently, {\eqref{eq2}} implies
$$
\Phi^{\prime}(g)=\lim\limits_{r\to0}\frac{\mathcal
H^{\nu}(\varphi(\operatorname{Box}_2(g,r)\cap
A_{\varepsilon}))}{\omega_{\nu}r^{\nu}}
=\sqrt{\det(\widehat{D}\varphi^*(g)\widehat{D}\varphi(g))}.
$$
Since the latter is valid for almost all $g\in A_{\varepsilon}$, it implies the area formula for the set $A_{\varepsilon}$.
We use standard argument to derive the area formula for the set~$A$. The theorem follows.

\begin{rem}
All results of this paper are also true for mappings of Carnot
manifolds enjoying conditions from {\cite[Remark~2.2.19]{vk_birk}}
with basis vector fields on $\mathbb M$ belonging to
$C^{1,\alpha}$, $\alpha>0$, and basis vector fields on
$\widetilde{\mathbb M}$ belonging to $C^{1,\widetilde{\alpha}}$,
$\widetilde{\alpha}>0$.
\end{rem}

\section*{Acknowledgement}

The author thanks Professor Sergey Vodopyanov for fruitful
discussions that have given rise to new ideas, and for interest to work.

{The research was supported by RFBR (Grant
10-01-00662-a) and the State Maintenance Program for supporting Scientific School and Young Scientists
(Grant NSh-6613.2010.1).}
\end{proof}


\begin{thebibliography} {md55}

\bibitem{F}
{\it Federer~H.} Geometric Measure Theory. NY:
Springer, 1969.

\bibitem{H}
\emph{\it Haj{\l}asz~P.}  {Sobolev Mappings, Co-Area Formula and
Related Topics} // Proceedings on Analysis and Geometry (Edt.:
S.~K.~Vodopyanov), Sobolev Institute Press,  Novosibirsk, 2000.
P.~227--254.

\bibitem{kirch}
{\it Kirchheim B.} { Rectifiable metric spaces: local structure
and regularity of the Hausdorff measure} // Proc. AMS, 1994. V.~121.
P.~113--123.

\bibitem{AK}
 {\it Ambrosio L., Kirchheim B.} // Rectifiable sets in metric and
Banach spaces // {Math. Ann.}, 2000. {V.~318}. P.~527--555.

\bibitem{km1}
{\it Karmanova M. B.} {Metric Differentiability of Mappings and
Geometric Measure Theory.} // Doklady Mathematics, 2005. V.~{71}  (2). P.~224--227.

\bibitem{km3}
{\it Karmanova~M.} Geometric Measure Theory Formulas on
Rectifiable Metric Spaces // Contemporary Mathematics,
2007. V.~424. In: ``The Interaction of Analysis and Geometry''.
P.~103--137.


\bibitem{km4}
{\it Karmanova M. B.} Area and coarea formulas for the mappings of Sobolev classes with values in a metric space // Sib. Math. J., 2007. V.
48 (4). P.~778--788.

\bibitem{V2}
 {\it Vodopyanov~S.} Geometry of Carnot--Carath\'{e}odory Spaces
and Differentiability of Mappings // Contemporary Mathematics,
2007. V.~424. In: ``The Interaction of Analysis and Geometry''.
P.~247--301.

\bibitem{pan1} {\it Pansu P.} {M\'{e}triques de Carnot--Carath\'{e}odory et
quasiisom\'{e}tries
des espaces sym\'{e}triques de rang un.} Ann. Math. (2), 1989. V.~{129} (1).
P.~1--60.

\bibitem{pauls}
 {\it Pauls S. D.} A notion of rectifiability modeled on Carnot
groups // Indiana University
Mathematics Journal, 2004. V.~53. P.~49--82.

\bibitem{mag}
{\it Magnani V.} Differentiability and Area formula on Stratified
Lie groups // Houston J. Math., 2001.
V.~27. P.~297--323.

\bibitem{vk_area}
 {\it Vodopyanov S. K., Karmanova M. B.} {An Area Formula for Contact $C^1$-Mappings
of Carnot Manifolds} // Complex Variables and Elliptic Equations, 2009 (accepted).

\bibitem{k_dan}
{\it Karmamova M. B.}  {A New Approach to Investigation of Carmot--Carath\'{e}odory
Geometry.} // Doklady Mathematics, 2010. V.~82, No~2. P.~746--750.


    \bibitem{k_appr}
{\it Karmanova M.}  {The New Approach to investigation of Carnot--Carath\'{e}odory Geometry.} // GAFA, 2011, accepted.


\bibitem{Var}
{\it Varchenko A. N.} On Obstructions to the Local Equivalence of
Distributions // Mat. Zametki, 1981. V. 29 (6). P. 939-947.

\bibitem {G}
{\it Gromov~M.} Carnot--Carath\'{e}odory Spaces Seen From
Within // In: Sub-Riemannian geometry.
Basel, Birkh\"auser Verlag,  1996. P.~79--318.

\bibitem{nsw} {\it Nagel A., Stein E. M., Wainger S.} {Balls and
metrics defined by vector fields I: Basic properties.}  Acta Math., 1985. V.~{155}. P.~ 103--147.

\bibitem{vk_birk}
{\it  Vodopyanov S., Karmanova M.}
{Geometry of Carnot--Carath\'{e}odory Spaces, Differentiability, Coarea and Area Formulas}
// In: ``Analysis and Mathematical Physics'', Birkh\"auser, 2009. P.~233--335.

\bibitem{Mit}
{\it Mitchell J.} // {On Carnot--Carath\'eodory metrics.}
J. Differential Geometry, 1985. V.~{21}. P.~35--45.

\bibitem{vk_geom}
{\it  Vodopyanov S. K., Karmanova M. B..}
Local Geometry of Carnot
Manifolds Under Minimal Smoothness. // Doklady Mathematics, 2007. V.~{75} (2). P.~240--246.
\bibitem{post} M. M. Postnikov, \textit{Lectures in Geometry. Semester V:
Lie Groups and Lie Algebras.} Moscow, ''Nauka``, 1982.

\bibitem{blu} A. Bonfiglioli, E. Lanconelli, F. Uguzzoni,
\textit{Stratified Lie Groups and Potential Theory for Their
Sub-Laplacians.} Springer, 2007.

\bibitem{fs} G.~B. Folland, E.~M. Stein, \textit{Hardy spaces on
homogeneous groups.}  Princeton Univ. Press, 1982.

\bibitem{V1}
{\it Vodopyanov S. K.}  {The theory of Lebesgue integral}:
\textit{lecture notes in calculus} // Novosibirsk State University
Press, Novosibirsk, 2006.

\end{thebibliography}
\end{document}